\newtheorem{theorem}{Theorem}
\newtheorem{lemma}[theorem]{Lemma}
\newtheorem{corollary}[theorem]{Corollary}
\theoremstyle{definition}
\newtheorem{definition}[theorem]{Definition}
\newtheorem{remark}[theorem]{Remark}
\newtheorem{example}[theorem]{Example}
\newtheorem{proposition}[theorem]{Proposition}
\numberwithin{equation}{section}
\numberwithin{theorem}{section}
\newenvironment{OMabstract}{\noindent\textbf{Abstract.} }{\medskip}
\newenvironment{OMsubjclass}{\noindent\textbf{Mathematics Subject Classification (2020):} }{\medskip}
\newenvironment{OMkeywords}{\noindent\textbf{Keywords:}  }{\medskip}
\begin{document}

\author{Katherine Rossella Foglia, Vittorio Colao} 
\title{On the Rate of Asymptotic Regularity of Iterative Methods for Nonexpansive Mappings in CAT(0) Spaces and Hyperbolic Optimization}
\maketitle


\begin{OMabstract}
The Krasnosel'ski\u{\i}–Mann and Halpern iterations are classical schemes for approximating fixed points of nonexpansive mappings in Banach spaces, and have been widely studied in more general frameworks such as $CAT(\kappa)$ and, more generally, geodesic spaces. 
Convergence results and convergence rate estimates in these nonlinear settings are already well established. 
The contribution of this paper is twofold: first, we extend to complete $CAT(0)$ spaces proof techniques originally developed in the linear setting of Banach and Hilbert spaces, thereby recovering the same asymptotic regularity bounds; second, we introduce a Halpern-type optimizer for hyperbolic optimization as a nonlinear counterpart of the Euclidean HalpernSGD scheme.

\end{OMabstract}

\begin{OMkeywords}
    Geodesic Spaces, Hadamard Spaces, Metric Fixed Point Theory, Optimization, Hyperbolic Deep Learning, Halpern Iterates, Krasnosel'ski\u{\i}–Mann iterates.
\end{OMkeywords}

\begin{OMsubjclass}
    47H10, 46N10, 53C25, 58C30.
\end{OMsubjclass}


\section{Introduction}
Deep representation learning in hyperbolic spaces has recently gained significant attention due to the ability of hyperbolic geometry to accommodate tree‑like structures and more generally, to effectively model hierarchically structured data \cite{peng2021hyperbolic}.
In \cite{ganea2018hyperbolicNN}, the authors showed that core neural network operations, such as multinomial logistic regression, feed-forward networks, and recurrent neural networks, can be reformulated in hyperbolic spaces. 
These developments enable the direct embedding of sequential data and the training of classifiers in hyperbolic spaces. 
As remarked in \cite{ganea2018hyperbolicNN}, until recently it was common practice to embed data in the Euclidean space $\mathbb{R}^n$ due to the availability of closed-form formulas for distance and inner product, but this is not sufficient. 
In fact, many datasets reside on manifolds or graphs with inherently non-Euclidean geometry \cite{bronstein2017geometric}.
For instance, no matter how high the dimension, Euclidean space cannot embed arbitrary tree structures with negligibly low distortion, while a two-dimensional hyperbolic space easily can \cite{sala2018representation}.
Therefore, the so-called {hyperbolic neural architectures can potentially lead to drastically more compact models than their Euclidean counterparts \cite{peng2021hyperbolic}.
Training these models, however, requires optimizers adapted to the geometric setting, and one of the most widely used is Riemannian stochastic gradient descent (RSGD) \cite{bonnabel2013stochastic}. 

As observed in \cite{colao2025optimizer, foglia2024halpernsgd}, Gradient Descent (GD) can be seen as a special case of the Krasnosel'ski\u{\i}--Mann (KM) iteration. 
The same works also point out the existence of alternative schemes, such as the Halpern and Viscosity iterations, which achieve faster convergence rates than KM and GD. 
These methods have been studied in both linear and nonlinear settings, and can be used to design faster GD-type optimizers for hyperbolic deep neural networks, as already done in the linear case for HalpernSGD \cite{foglia2024halpernsgd}.

In particular, the rates of asymptotic regularity for both the Krasnosel’skii--Mann and Halpern iteration have been established also in the nonlinear setting. Our contribution in this work is to extend to specific geodesic spaces the proof techniques originally developed in the linear setting, thereby recovering the same rates of convergence. 
It is worth remarking that part of the results we present in this paper have already been investigated in the literature, although with different approaches and proofs.
The aim of this paper is therefore to develop a theoretical understanding of nonlinear optimization and to set the stage for a Hyperbolic version of HalpernSGD.

The paper is organized as follows. Section~\ref{sec:preliminari} provides the mathematical background needed for our analysis. 
Section~\ref{sec:statoarte} reviews existing convergence results for KM and Halpern-type iterations in different geometric settings. 
In Section~\ref{sec:ContributionIperbolici}, we present and prove our main results on convergence rates in specific geodesic spaces, including hyperbolic ones. 
Section~\ref{sec:iperbolicHalpernGD} provides the theoretical construction of a hyperbolic HalpernGD-type optimizer.

Finally, Section~\ref{sec:conclusione} outlines future work and contains our concluding remarks.

\section{Preliminaries}
\label{sec:preliminari}
In this section, we introduce the necessary background on geodesic spaces following \cite{bridson2013metric}.

\medskip

\begin{definition} \label{eq:geodesicProperty}
    A metric space \((X,d)\) is called a geodesic space if for every pair of distinct points \(x,y \in X\), there exist a real number \(l>0\) and a continuous map \(\gamma : [0,l] \to X\) such that:
    \[
    \gamma(0)=x,\qquad \gamma(l)=y,
    \]
    and
    \[
    d(\gamma(t_1),\gamma(t_2)) = |t_1-t_2|
    \quad \text{for all } t_1,t_2\in[0,l].
    \]
\end{definition}

Such a map \(\gamma\) is called a geodesic path, or more briefly a geodesic, joining $x$ and $y$ and, in other words, it is an isometry between the real interval $[0,l]$ and its image $\gamma([0,l])$, which in turn is called a geodesic segment with endpoints $x$ and $y$.
In particular, taking \(t_1=0\) and \(t_2=l\), one obtains \(l=d(x,y)\). Therefore, by a linear reparametrization, geodesics may equivalently be defined on the interval \([0,1]\).
If $\gamma$ is defined on $[0,+\infty)$ it is called a geodesic ray; if it is defined on the whole $\mathbb{R}$ it is called a geodesic line; finally, if it is defined on an interval $I\subset\mathbb{R}$, not necessarily closed, and for every $t\in I$ there exists $\varepsilon>0$ such that the restriction of $\gamma$ to $I \cap (t-\frac{\varepsilon}{2},t+\frac{\varepsilon}{2})$ is a geodesic it is called a local geodesic.

In particular, for a real constant $r>0$, $(X, d)$ is said to be an $r$-geodesic space if every pair of points $x,y\in X$ with $d\big(x,y\big)<r$ can be joined by a geodesic. If $r= \infty$, an $\infty$-geodesic space corresponds to a geodesic space as in Definition \ref{eq:geodesicProperty}.
Important examples of geodesic spaces include:

\begin{example}
For \(n\in \mathbb{N}\) and \(\kappa\in \mathbb{R}\), the model space \(M_\kappa^n\) is defined by
\[
M_\kappa^n=
\begin{cases}
\mathbb{H}^n, & \text{if } \kappa<0,\\[0.3em]
\mathbb{E}^n, & \text{if } \kappa=0,\\[0.3em]
\mathbb{S}^n, & \text{if } \kappa>0,
\end{cases}
\]
endowed with the metric
\[
d_{M_\kappa^n}=
\begin{cases}
\dfrac{1}{\sqrt{-\kappa}}\, d_H, & \text{if } \kappa<0,\\[0.8em]
d_E, & \text{if } \kappa=0,\\[0.3em]
\dfrac{1}{\sqrt{\kappa}}\, d_S, & \text{if } \kappa>0,
\end{cases}
\]
where:
\begin{itemize}
    \item \(\mathbb{E}^n\) is the Euclidean space \(\mathbb{R}^n\) with the usual Euclidean distance \(d_E\);
    
    \item \(\mathbb{S}^n\subset \mathbb{R}^{n+1}\) is the unit sphere with the angular metric
    \[
    d_S(x,y)=\arccos(\langle x,y\rangle);
    \]
    
    \item \(\mathbb{H}^n\) is the hyperbolic space
    \(
    \mathbb{H}^n:=\left\{u\in\mathbb{R}^{n+1}\,\middle|\,\langle u,u\rangle_L=-1,\ u_0>0\right\},
    \)
    where
    \[
    \langle u,v\rangle_L:=-u_0v_0+\sum_{i=1}^n u_iv_i,
    \]
    and the hyperbolic metric \(d_H\) is given by
    \(
    \cosh(d_H(x,y))=-\langle x,y\rangle_L.
    \)
\end{itemize}

With this definition, \((M_\kappa^n,d_{M_\kappa^n})\) is a geodesic space (see \cite[p.23, Definition 2.10]{bridson2013metric}) and, according to the sign of \(\kappa\), the geodesics are Euclidean segments, arcs of great circles, or hyperbolic geodesics.

\end{example}

We say that \((X,d)\) is a uniquely geodesic space if for every pair of points \(x,y \in X\), there exists exactly one geodesic joining them. In such spaces, the geodesic segment with endpoints \(x\) and \(y\) is denoted by \([x,y]\). An important family of examples of uniquely geodesic spaces include (see \cite{bridson2013metric}):


\begin{example}
    The model spaces $M_\kappa^n$ provide basic examples in the theory of geodesic spaces. As stated in \cite[Proposition 2.11]{bridson2013metric}, $M_\kappa^n$ is uniquely geodesic for $\kappa\leq 0$. On the other hand, when $\kappa>0$, that is, in the spherical case, $M_\kappa^n$ is not uniquely geodesic in general. Indeed, antipodal points on the sphere can be joined by infinitely many geodesics, namely all the semicircumferences having those points as endpoints. However, if the distance between two points is less than $\frac{\pi}{\sqrt{\kappa}}$, then the geodesic joining them is unique.
\end{example}

\medskip

Let \((X,d)\) be a geodesic space and \(x,y \in X\), we say that a point \(z_t \in X\) belongs to a geodesic segment with endpoints $x$ and $y$ if there exist \(t \in [0,1]\) such that:
\begin{equation}
    \label{AppartenenzaAllaGeodetica}
    d(z_t,x)=t\,d(x,y)\quad\text{and}\quad d(z_t,y)=(1-t)\,d(x,y).
\end{equation}
Fixing a specific geodesic segment with endpoints $x$ and $y$, we use the notation
\begin{equation}
\label{oplus}
    z_t = (1-t)x \oplus ty.
\end{equation}
In particular, for $t = \tfrac{1}{2}$ the point $z_t$ is called the midpoint of $x$ and $y$, and is denoted by $m(x,y)$.

\begin{remark}   \label{remarkPlus}
If $(X,d)$ is uniquely geodesic, then there exists a unique geodesic segment with endpoints $x$ and $y$. In this case, there is no ambiguity in writing $z_t \in [x,y]$, and it is not necessary to specify a particular geodesic segment in order to guarantee the uniqueness of $z_t$.
\end{remark}


A further important class of geodesic spaces is given by CAT(\(\kappa\)) spaces, introduced to generalize curvature conditions to metric spaces. In order to formally introduce them we provide some important definitions (see \cite[p.158]{bridson2013metric}):
\begin{definition} 
    In a geodesic space $(X,d)$ a geodesic triangle $\Delta([p,q],[q,r],[r,p])$ is characterized by three distinct points $p,q,r \in X$, called the vertices, and by the choices of three geodesic joining them, whose geodesic segments we denote by $[p,q],[q,r]$, $[r,p]$ and called the sides of the triangle.
\end{definition}
Even if the space is not uniquely geodesic, we can fix a single geodesic for each pair of vertices, so that the notation $[x,y]$ remains unambiguous. In the uniquely geodesic case, this choice is unnecessary and the triangle may be written simply as $\Delta(p,q,r)$. We say that a point $x$ belongs to $\Delta([p,q],[q,r],[r,p])$ if it lies on one of its sides and the perimeter is then the sum of the three side lengths, as the notion of distance remains well defined.
\begin{definition}
    Let $(X,d)$ be a uniquely geodesic space, for a triangle $\Delta(p,q,r)$ in $(X,d)$, a comparison triangle $\overline{\Delta}(\overline{p},\overline{q},\overline{r})$ in the model space $M_\kappa^2$ is a triangle such that: $d_{M_\kappa^2}(\overline{p},\overline{q})=d(p,q)$, $d_{M_\kappa^2}(\overline{q},\overline{r})=d(q,r)$ and $d_{M_\kappa^2}(\overline{r},\overline{p})=d(r,p)$.
\end{definition}

Similarly, for $x\in \Delta$, a comparison point $\overline{x}\in \overline{\Delta}$ is a point lying on the side of $\overline{\Delta}$ corresponding to the side of $\Delta$ to which $x$ belongs, and has the same distance from the corresponding vertices as $x$ has from its own. Let $D_\kappa$ be the following quantity,
\[
    D_\kappa :=
    \begin{cases}
     \frac{\pi}{\sqrt{\kappa}} & \text{if } \kappa > 0 \\
    \infty & \text{if } \kappa \leq 0
    \end{cases} \text{ ,} 
\]
in \cite[p.24, Lemma 2.14]{bridson2013metric}, it is proved that in a uniquely geodesic space $(X,d)$, for any geodesic triangle $\Delta(p,q,r) \subset X$ whose perimeter is strictly less than $2D_\kappa$, a comparison triangle exists and it is unique up to isometry of $M_\kappa^2$.

 Having properly introduced the notions of geodesic triangle and comparison triangle, we can now provide the definition of the $CAT(\kappa)$ space as in \cite[p.158, Definition 1.1]{bridson2013metric}: 
\begin{definition} 
\label{Cat(k) spaces}
   Let $(X,d)$ be a geodesic space, let $\kappa$ be a fixed real number, and let $\Delta$ be a geodesic triangle in $X$ with perimeter strictly less than $2D_\kappa$. We say that $\Delta$ satisfies the $CAT(\kappa)$ inequality if, for all $x,y\in \Delta$ and the respective comparison points $\overline{x},\overline{y}\in \overline{\Delta}$, it holds that
    \begin{equation}\label{eq:Cat(k)inequality}
        d\big(x,y\big)\leq d_{M_\kappa^2}(\overline{x},\overline{y}).
    \end{equation}
    $X$ is called a $CAT(\kappa)$ space if it is $D_\kappa$-geodesic and each triangle $\Delta$ in $X$ with perimeter strictly less than $2D_\kappa$ satisfies \eqref{eq:Cat(k)inequality}.
\end{definition}

Note that if $\kappa \leq 0$, the inequality \eqref{eq:Cat(k)inequality} must hold for every geodesic triangle in $X$. A first example of a $CAT(\kappa)$ space is the model space $M_\kappa^2$, while a typical example of a $CAT(0)$ space is a pre-Hilbert space. An important characterization result is given in the following.

\begin{theorem}[\cite{bridson2013metric}, Theorem~1.12, p.~165]
Every $CAT(\kappa)$ space is also a $CAT(\kappa')$ space for all $\kappa' \geq \kappa$. Moreover, if $X$ is a $CAT(\kappa')$ space for every $\kappa' > \kappa$, then $X$ is a $CAT(\kappa)$ space.
\end{theorem}
The previous result implies that every $CAT(\kappa)$ space with $\kappa \le 0$ is also a $CAT(0)$ space; in particular, this happens for the hyperbolic space $\mathbb{H}^n$.

We remark that $CAT(\kappa)$ spaces are, in general, not complete with respect to their given metric. A well-known complete $CAT(\kappa)$ space is obtained when \(\kappa=0\). In fact, complete $CAT(0)$ spaces are referred to as Hadamard spaces.


We can now deal with several notions of convexity in the setting of the geodesic spaces.

As in \cite{bridson2013metric}, we say that a subset $C$ of a geodesic space $X$ is convex if for all $x,y\in C$, the image of every geodesic joining them lies in $C$. If $X$ is uniquely geodesic, $C$ is said to be {strongly convex if it is convex and there is no other local geodesic joining $x$ and $y$. 
The converse implication does not hold in general and a counterexample can be obtained working on the torus endowed with the intrinsic metric.
We recall that a metric space \((X,d)\) satisfies the convexity condition of the metric if for all $x, y, w \in X$ and for all $t \in [0,1]$ it holds:
\begin{equation}
\label{ConvCond}
    d(w, (1 - t)x \oplus ty) \leq (1 - t)d(w,x) + t d(w,y).
\end{equation}

In particular, as stated in \cite[Proposition~2.2, p.~176]{bridson2013metric}, if $(X,d)$ is a CAT(0) space, the distance function $d : X \times X \to \mathbb{R}$ is convex, which means that, given any pair of geodesics $\gamma : [0,1] \to X$ and $\gamma' : [0,1] \to X$, the following inequality holds for all $t \in [0,1]$:
\begin{equation} 
\label{ConvConditionCat0}
d(\gamma(t), \gamma'(t)) \leq (1-t) d(\gamma(0), \gamma'(0)) + t\, d(\gamma(1), \gamma'(1)).
\end{equation} 

For later use, we also recall the notion of convexity for functions on geodesic spaces.

\begin{definition}
\label{def:geodconvf}
Let $(X,d)$ be a geodesic space, let $C \subseteq X$ be a convex subset, and let
$f : C \to (-\infty,+\infty]$.
We say that $f$ is geodesically convex if for every geodesic
$\gamma : [0,1] \to C$ and for all $ t \in [0,1]$ one has
\begin{equation*}
f(\gamma(t))
\leq
(1-t)\,f(\gamma(0))
+
t\,f(\gamma(1)).
\end{equation*}
If $X$ is uniquely geodesic, this is equivalent to $ f\bigl((1-t)x \oplus ty\bigr) \leq (1-t)\,f(x) + t\,f(y)$  for all $x,y \in C$ and for all $ t \in [0,1]$.
\end{definition}

Since the classical notions of convergence in normed spaces do not directly extend to this setting, an alternative notion, introduced by Lim in \cite{lim1976remarks} and known as $\Delta$-convergence, plays a central role in the analysis.

\begin{definition} 
\label{defDeltaConvergence}
    Let $\{x_n\}$ be a bounded sequence in a metric space $(X,d)$. First, for all $x\in X$, define the quantity 
    \[
    r(x,\{x_n\})=\limsup\limits_{n\rightarrow\infty}{d(x,x_n)}.
    \]
    Then, for $\{x_n\}$ we can define the asymptotic radius $r(\{x_n\})$ and the asymptotic center $A(\{x_n\})$ respectively as follows:
    \begin{align*}
        r(\{x_n\})&:=\inf\limits_{x\in X} r(x,\{x_n\}), \\
        A(\{x_n\})&:=\{x\in X \,: \, r(x,\{x_n\})= r(\{x_n\})\}.
    \end{align*}
    The sequence ${x_n}$ is said to $\Delta$-converge to $x$, in symbol $x_n\xrightarrow{\Delta}x$, if for every subsequence $\{x_{n_k}\}$ of $\{x_n\}$, the point $x$ is the unique asymptotic center of $\{x_{n_k}\}$, that is equivalent to say that $x$ is the only element to satisfy:
    \begin{equation}\label{eq:weakdeltaconc}
    \limsup_{k\to\infty} d(x_{n_k},x)\le \limsup_{k\to\infty} d(x_{n_k},y) \quad \,\text{for all } y\in X.
    \end{equation}
\end{definition}

As noted in \cite{lim1976remarks}, the inequality \eqref{eq:weakdeltaconc} does not guarantee the uniqueness of the asymptotic center.
However, it is known that if a sequence ${x_n}$ in a complete $CAT(\kappa)$ space has asymptotic radius strictly less than $\frac{D_\kappa}{2}$, then it admits a unique asymptotic center (see \cite[Proposition 7]{dhompongsa2006fixed} for $\kappa=0$ case and \cite[Proposition 4.1]{espinola2009cat} for $\kappa>0$ case).
Clearly, $\Delta$-convergence is much weaker than strong convergence in a metric space.
Moreover, it has been proved in \cite{espinola2009cat} that $\Delta$-convergence coincides with weak convergence in Hilbert spaces.

Finally, we recall a metric condition fundamental for what follows.
\begin{definition} 
Let $(X,d)$ be a geodesic space and $C \subset X$. A mapping $T:C\to C$ is said to be nonexpansive if $d(Tx,Ty) \le d(x,y)$ for all $x,y\in C$.
\end{definition}

The study of nonexpansive mappings in $CAT(\kappa)$ spaces was initiated by Kirk (see, e.g., \cite{kirk2003geodesic,kirk2004geodesic}), and an important class of nonexpansive operators in the linear setting is the so-called forward map, which basically coincides with the step of classical gradient descent algorithm as stated in the following result.
\begin{theorem}[See ~\cite{bauschke2010baillon,baillon1977}]
\label{Baillon-Haddad} 
Let $H$ be a real Hilbert space, and let $f:H\to\mathbb{R}$ be a convex and Fréchet–differentiable function with $L$-Lipschitz continuous gradient $\nabla f$.  
Fix $\eta\in(0,2/L)$ and define the forward operator $F:H\to H$ by
\[
F = I - \eta \nabla f.
\]
Then $F$ is averaged nonexpansive, that is,
\(
F = \lambda I + (1-\lambda)T
\)
for some $\lambda\in(0,1)$ and some nonexpansive mapping $T:H\to H$. Moreover,
\[
\theta^* \in \arg\min_x f(x) 
\quad\Longleftrightarrow\quad 
\theta^* = F(\theta^*)
\quad\Longleftrightarrow\quad 
\theta^* = T(\theta^*),
\]
so that the fixed point set $\mathrm{Fix}(T)$ coincides with the set of minimizers of $f$.
\end{theorem}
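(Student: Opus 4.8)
The plan is to prove the Baillon--Haddad theorem in three logically distinct parts: first the firm nonexpansiveness (equivalently, $\tfrac12$-cocoercivity) of the forward operator, which immediately gives the averaged structure; then the chain of equivalences characterizing the minimizers as the fixed points. I would begin by recording the key analytic input, namely the Baillon--Haddad inequality itself: for a convex Fr\'echet-differentiable $f$ with $L$-Lipschitz gradient, the gradient is $\tfrac1L$-cocoercive, i.e.
\[
\langle \nabla f(x) - \nabla f(y),\, x - y\rangle \;\ge\; \tfrac{1}{L}\,\|\nabla f(x) - \nabla f(y)\|^2
\qquad \forall\, x,y \in H.
\]
This is the heart of the matter, and I would derive it by the standard device of applying the descent lemma (a consequence of $L$-Lipschitzness of $\nabla f$) to the convex \emph{Bregman-type} auxiliary functions $g_y(x) = f(x) - \langle \nabla f(y), x\rangle$, which attain their minimum at $x=y$; comparing the value at $x$ with the value obtained after one gradient step of $g_y$ yields exactly the cocoercivity bound.

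Granting cocoercivity, the next step is a direct computation with $F = I - \beta\nabla f$. Writing $u = x-y$ and $v = \nabla f(x) - \nabla f(y)$, I would expand
\[
\|Fx - Fy\|^2 \;=\; \|u\|^2 - 2\beta\,\langle u, v\rangle + \beta^2\|v\|^2,
\]
and bound $\langle u,v\rangle \ge \tfrac1L\|v\|^2$ to obtain
\[
\|Fx - Fy\|^2 \;\le\; \|u\|^2 - \beta\bigl(\tfrac2L - \beta\bigr)\|v\|^2 \;\le\; \|x-y\|^2,
\]
where the choice $\beta \in (0, 2/L)$ makes the coefficient $\beta(\tfrac2L - \beta)$ strictly positive. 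This shows $F$ is nonexpansive; to upgrade to \emph{averaged}, I would exhibit the constant explicitly. Setting $\lambda = \beta L/2 \in (0,1)$ and defining $T := \tfrac{1}{1-\lambda}\bigl(F - \lambda I\bigr)$, a parallel expansion of $\|Tx-Ty\|^2$ using cocoercivity shows $T$ is nonexpansive, and by construction $F = \lambda I + (1-\lambda)T$, which is the desired averaged decomposition.

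Finally, the equivalences are elementary once the operators are in hand. The equivalence $\theta^* \in \arg\min f \iff \nabla f(\theta^*) = 0$ is Fermat's rule, valid since $f$ is convex and differentiable (stationarity is both necessary and sufficient for a global minimum). Then $\nabla f(\theta^*) = 0 \iff \theta^* = \theta^* - \beta\nabla f(\theta^*) = F(\theta^*)$ because $\beta \neq 0$. For the last link, $F(\theta^*) = \theta^*$ rewrites as $\lambda\theta^* + (1-\lambda)T(\theta^*) = \theta^*$, i.e. $(1-\lambda)\bigl(T(\theta^*) - \theta^*\bigr) = 0$, and since $\lambda \neq 1$ this holds iff $T(\theta^*) = \theta^*$; hence $\operatorname{Fix}(T) = \operatorname{Fix}(F) = \arg\min f$. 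The main obstacle is the cocoercivity inequality: everything downstream is a short algebraic manipulation, but establishing the $\tfrac1L$-cocoercivity of $\nabla f$ rigorously requires the convexity of $f$ in an essential way (for a merely Lipschitz-gradient function the inequality fails), so care is needed in setting up the auxiliary functions and invoking the descent lemma correctly.
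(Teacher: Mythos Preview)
The paper does not supply its own proof of this theorem; it is stated with external citations and used as a known black box. Your proposal therefore goes beyond what the paper does, and the overall strategy you outline---establish $\tfrac1L$-cocoercivity of $\nabla f$ via the descent lemma, then read off averagedness of $F$, then check the fixed-point equivalences---is the standard route and is correct in spirit.

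There is, however, a computational slip in your averaging constant. With the paper's convention $F = \lambda I + (1-\lambda)T$, your choice $\lambda = \beta L/2$ gives
\[
T \;=\; \frac{F-\lambda I}{1-\lambda} \;=\; I - \frac{2\beta}{\,2-\beta L\,}\,\nabla f,
\]
and the cocoercivity bound makes this nonexpansive only when $\tfrac{2\beta}{2-\beta L}\le \tfrac2L$, i.e.\ $\beta \le 1/L$; for $\beta \in (1/L,\,2/L)$ your $T$ fails to be nonexpansive and the ``parallel expansion'' you allude to would not close. The correct identification is $\lambda = 1-\beta L/2$, which yields $T = I - \tfrac{2}{L}\nabla f$; the same expansion then gives $\|Tx-Ty\|^2 \le \|x-y\|^2$ with no residual constraint on $\beta$. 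Equivalently: in the standard terminology $F$ is $\alpha$-averaged with $\alpha=\beta L/2$, meaning $F=(1-\alpha)I+\alpha N$, and you have interchanged $\alpha$ and $1-\alpha$ when matching this against the paper's $F=\lambda I+(1-\lambda)T$. Your treatment of the fixed-point equivalences is fine.
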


The nonexpansivity of this operator played a crucial role in neural network hyperparameter optimization and, in particular, was essential for the development of the HalpernSGD optimizer (see \cite{foglia2024halpernsgd,colao2025optimizer,colao2026convergence}).

In the nonlinear setting, more precisely on Hadamard manifolds, the Euclidean nonexpansivity conclusion for the forward operator does not extend automatically. Section~\ref{sec:iperbolicHalpernGD} provides an explicit counterexample on \(\mathbb H^2\).

However, an important example of a nonexpansive mapping in Hadamard spaces is the proximal mapping (or resolvent), defined as follows.
\begin{definition}
\label{def:proxHadamard}
Let $(X,d)$ be a Hadamard space and let $f:X\to(-\infty,+\infty]$ be a proper, lower semicontinuous, and convex function.  
For any $\lambda>0$, the proximal mapping (or resolvent) of $f$ (see~\cite[Ch.~2.2 and Ch.~4.2, Lemma~4.2.2]{bacak2014convex}) is defined by
\[
J_{\lambda f}(x)
:= \operatorname*{argmin}_{y\in X}\Big\{\, f(y) + \tfrac{1}{2\lambda}\, d(x,y)^2 \Big\}, \qquad x\in X.
\]
\end{definition}
This operator can be used to develop an analogue of the HalpernGD optimizer in the hyperbolic setting, as discussed in Section~\ref{sec:iperbolicHalpernGD}.

In \cite{kirk2008concept}, the concept of $\Delta$-convergence was investigated for nonexpansive mappings in $CAT(0)$ spaces, showing that such spaces provide a natural framework for this type of convergence. Moreover, by \cite[Theorem 18, p. 207]{kirk2003geodesic}, every nonexpansive map defined on a nonempty, closed, bounded and convex subset of a complete $CAT(0)$ space admits a fixed point.
These results justify the choice of the setting for our work.


\section{Krasnosel'skii-Mann and Halpern iterative methods}
\label{sec:statoarte}
Iterative methods for approximating fixed points of mappings have a long and rich history, originating in the context of Banach spaces and progressively extending to more general nonlinear spaces such as CAT(0) spaces and geodesic spaces.
Their study has been a flourishing area of research over the past decades, with numerous applications in nonlinear analysis, optimization, and partial differential equations \cite{browder1965, baillon1996, rockafellar1970, rockafellar1976}.
The Krasnosel'ski\u{\i}--Mann (KM) and Halpern-type iterations stand out as the most classical and extensively investigated schemes. 
The study of their convergence behavior has given rise to a vast literature. 
Beyond the linear setting, however, the intrinsic convex structure of these algorithms makes it more challenging to establish approximation results for fixed points. 
Consequently, substantial progress has been achieved only in certain specific classes of metric spaces, such as hyperbolic spaces, the Hilbert ball, and Hadamard manifolds.
\\

Let \(X\) be a Banach space, let \(K\subseteq X\) be nonempty and convex, and let
\(T:K\to K\) be a nonexpansive mapping. The Krasnosel'skii--Mann iteration is
defined by
\begin{equation}\label{eq:Krasnoselskii-Mann iteration}
x_{n+1}=(1-\lambda_{n+1})x_n+\lambda_{n+1}Tx_n, \qquad x_0\in K,
\end{equation}
where $(\lambda_n)_{n \ge 1} \subset [0,1)$. Its convergence theory in Banach spaces is classical:
asymptotic regularity and convergence depend on both the geometry of the space and
the control sequence, while strong convergence can be recovered under additional
compactness assumptions; see, e.g., \cite{mann1953mean,ishikawa1976}.

In a uniquely geodesic space, the natural nonlinear analogue is
\begin{equation}\label{eq:KM-Geodesic}
x_{n+1}=(1-\lambda_{n+1})x_n\oplus \lambda_{n+1}Tx_n, \qquad x_0\in K,
\end{equation}
where \(\oplus\) denotes the convex combination along the unique geodesic joining
\(x_n\) and \(Tx_n\), meaning that $x_{n+1}$ lies on the geodesic segment $[x_{n}, Tx_{n}]$ (see notation in \eqref{oplus}). If \(X\) is a complete \(CAT(0)\) space and \(\mathrm{Fix}(T)\neq\emptyset\),
then \eqref{eq:KM-Geodesic} \(\Delta\)-converges to a fixed point of \(T\) under standard
assumptions on \((\lambda_n)\), for instance \(\sup_n\lambda_n<1\) and
\(\sum_n\lambda_n=+\infty\) \cite{dhompongsa2008convergence}. More generally,
analogous results hold in complete \(CAT(\kappa)\) spaces under the radius
restriction $d(x_0,\mathrm{Fix}(T))\leq \tfrac{D_\kappa}{4}$ and the condition \(\sum_n \lambda_n(1-\lambda_n)=+\infty\); if the space
is boundedly compact, the convergence is strong \cite{he2012mann}. In the special
case \(\kappa=0\), the radius restriction disappears.

 We turn now our attention to a closely related scheme which, under suitable assumptions, ensures strong convergence.
Let \(X\) be a Banach space, let \(K\subseteq X\) be nonempty, closed and convex, and let \(T:K\to K\) be nonexpansive with \(\mathrm{Fix}(T)\neq\emptyset\). For a sequence $(\alpha_k)_{k \ge 1} \subset (0,1]$, the Halpern iteration is defined by
\begin{equation}\label{eq:Halpern methods}
    z_{n+1}=\alpha_{n+1}u+(1-\alpha_{n+1})Tz_n,
    \qquad z_0,u\in K.
\end{equation}
Introduced by Halpern~\cite{halpern1967}, this scheme is a classical strongly
convergent alternative to the Krasnosel'skii--Mann iteration.
In Hilbert spaces, strong convergence is known under the standard conditions
\begin{itemize}
    \item[\textbf{(C1)}] $\lim_{n \to \infty} \alpha_n = 0\qquad$ \textbf{(C2)}  $\sum_{n=0}^\infty \alpha_n = +\infty$.
\end{itemize}
together with a mild regularity assumption on the control sequence, for instance,
\begin{itemize}
    \item [\textbf{(C3)}] $\sum_{n=0}^{\infty} \vert \alpha_{n+1}-\alpha_n \vert < \infty$.
\end{itemize}
(see, e.g.,~\cite{halpern1967,wittmann1992approximation}). More generally, the viscosity iteration is given by
\begin{equation}\label{eq:Viscosity iteration}
    y_{n+1}=\alpha_{n+1}f(y_n)+(1-\alpha_{n+1})Ty_n,
    \qquad y_0\in K,
\end{equation}
where \(f:K\to K\) is a \(\beta\)-contraction. Choosing \(f\equiv u\), one recovers
the Halpern scheme~\eqref{eq:Halpern methods}~\cite{xu2004viscosity}.

In a complete uniquely geodesic space, the nonlinear counterpart of
\eqref{eq:Halpern methods} is
\begin{equation}\label{eq:HalpernGeodesic}
    z_{n+1}=\alpha_{n+1}u\oplus(1-\alpha_{n+1})Tz_n,
    \qquad z_0,u\in K.
\end{equation}
If \(X\) is a complete \(CAT(0)\) space and \((\alpha_n)\) satisfies the above
conditions (C1), (C2) and (C3) then \eqref{eq:HalpernGeodesic} converges strongly to the metric projection of \(u\) onto \(\mathrm{Fix}(T)\)~\cite{saejung2009halpern}. This strong convergence is one of the main advantages of Halpern-type methods in the nonlinear setting.


\subsection{Rate of asymptotic regularity}

Having examined the main convergence results for the KM iterative method, we now recall some important results on its rate of asymptotic regularity. This analysis is relevant to the use of such iterations in optimization algorithms for training deep learning models.

Consider now a nonexpansive operator \(T : K \to K\), where \(K\) is a bounded and convex subset of a normed space \(X\).

In~\cite{baillon1996}, it was shown that the Krasnosel'ski\u{\i}--Mann scheme \eqref{eq:Krasnoselskii-Mann iteration} yields an asymptotic regularity rate that is independent of both the initial point \(x_0\) and the particular operator \(T\), depending instead on the diameter of the set \(K\).
This was recently confirmed by Cominetti in~\cite{cominetti2014}, who proved that
\[
\|x_n-Tx_n\| \le \frac{\operatorname{diam}(K)}{\sqrt{\pi \sum_{k=1}^{n}\lambda_k(1-\lambda_k)}}.
\]

Moreover, Bravo and Cominetti later demonstrated that this bound is sharp~\cite{bravo2018}. It follows that, under the standard assumptions on $(\lambda_k)$, the convergence rate remains bounded by \(\mathcal{O}(1/\sqrt{n})\).

An explicit example of a nonexpansive linear operator is the right-shift operator acting on \(\bigl(\ell^1(\mathbb{N}), \|\cdot\|_1\bigr)\). For this operator, the KM iteration starting from \(x^0=(1,0,0,\dots)\) satisfies, for all \(n \ge 0\), \(\|x^n - T x^n\|_1 \ge \frac{1}{\sqrt{n+1}}\) (see \cite{contreras2023}).
 
However, faster rates can be achieved by alternative schemes: for instance, in Hilbert spaces, in~\cite{bot2022} a convergence rate of $\mathcal{O}(1/n)$ was obtained via a Nesterov-type acceleration technique.

As in the case of the Krasnosel'ski\u{\i}--Mann iteration, we review some results from the literature on the rate of asymptotic regularity of the viscosity iteration \eqref{eq:Viscosity iteration} and the Halpern iteration \eqref{eq:Halpern methods}. When applied to convex bilevel optimization problems, the authors of~\cite{sabach2017first} derived the following estimate in the Euclidean setting \(X = \mathbb{R}^d\):

\begin{equation*}
    \|  z_n-Tz_n \| \leq \frac{4}{n+1} \| z_0 - P_{\mathrm{Fix}(T)}u \|,
\end{equation*}
where \(P_{\mathrm{Fix}(T)}(\cdot)\) denotes the metric projection onto the fixed-point set \(\mathrm{Fix}(T)\).  
Later, Contreras et al.~\cite{contreras2023} showed that this bound extends to general normed linear spaces.
In the specific case where \(T\) is a nonexpansive operator defined on a Hilbert space, sharper results are available. 
Lieder~\cite{lieder2021convergence} established the improved bound
\begin{equation*}
    \| z_n - Tz_n \| \leq \frac{2}{n+1}\, \| z_0 - P_{\mathrm{Fix}(T)}u \|,
\end{equation*}
and furthermore constructed a family of nonexpansive operators on \(H=\mathbb{R}\) for which this inequality holds with equality, thus demonstrating the optimality of the rate. A simple example, highlighting the sharpness of the bound, is given by the rotation operator on the plane $\mathbb{R}^2$ with angle $\theta_n = \frac{\pi}{n+1}$. 
For $\lambda _k = \frac{k}{k+1}$ and starting point $x_0=(1,0)$, one obtains $\|x_n - T x_n\|_2 = \frac{2}{n+1}$ for every $n$ (see \cite{contreras2023}).
Regarding the asymptotic regularity for both KM and Halpern schemes, the rates known in linear spaces extend to the nonlinear setting of geodesic spaces. 
In particular, Leu\c{s}tean and Pinto~\cite{leustean2022alternating} 
established that the alternating Halpern--Mann iteration in complete $CAT(0)$ and more general hyperbolic spaces satisfies an asymptotic regularity bound of order $O(1/\sqrt{n})$, thus confirming the rate already known for the Krasnosel'ski\u{\i}--Mann in Banach spaces. 
On the other hand, Pinto and Pischke~\cite{pinto2025halpern} proved that the Halpern iteration  with a variable anchor point in Hadamard spaces achieves the optimal rate $O(1/n)$, in agreement with the estimates derived in Hilbert spaces. 
These results confirm that the geometry of nonpositively curved metric spaces preserves the asymptotic efficiency of the classical fixed-point algorithms.
\\

\section{Main result}
\label{sec:ContributionIperbolici}

This section outlines our main research contributions. 
We first extend the classical analysis of Cominetti~\cite{cominetti2014} on the asymptotic regularity rate of the Krasnosel'ski\u{\i}--Mann (KM) iteration to the nonlinear setting of complete $CAT(0)$ spaces, establishing the same convergence rate as in Banach spaces (see Subsection~\ref{CONTRIBUTION1_ip}).

We then consider the viscosity iteration, and by adapting the argument of Sabach et al.~\cite{sabach2017first}, we derive an asymptotic regularity bound in $CAT(0)$ spaces. As a special case, this result also covers the Halpern iteration, thus recovering in the nonlinear framework, via a proof adapted from the linear setting, estimates already known in the literature.

While the general argument for the KM scheme requires the space $(X,d)$ to be complete, uniquely geodesic, and to satisfy the convexity condition \eqref{ConvCond} for the metric $d$, we will assume $X$ to be a complete $CAT(0)$ space. This choice reflects the central role that Hadamard spaces play in the convergence analysis of fixed-point iterations in the literature. Recall that every $CAT(0)$ space is uniquely geodesic and satisfies the convexity condition (CC) (see~\cite[Lemma~2.4]{dhompongsa2008convergence}).


\subsection{Asymptotic Regularity of the Krasnosel'ski\u{\i}--Mann Iteration in Complete $CAT(0)$ Spaces}
\label{CONTRIBUTION1_ip}
We begin this subsection by stating our main result through the following theorem.

\begin{theorem}\label{thm:KM-rate}
Let $(X,d)$ be a complete $CAT(0)$ space, and let $K \subset X$ be nonempty, closed, convex, and bounded. Suppose that $T:K \to K$ is nonexpansive. Then, for the Krasnosel'ski\u{\i}--Mann iteration \eqref{eq:KM-Geodesic} generated by a sequence $\{\lambda_n\} \subset (0,1)$, the following inequality holds
\[ d(x_n, Tx_n)\ \le\ \frac{\mathrm{diam}(K)}{\sqrt{\pi\, \sum_{i=1}^{n} \lambda_i(1 - \lambda_i)}}\,,\qquad n \ge 1. \]
\end{theorem}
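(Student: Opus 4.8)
The plan is to transplant the two–layer structure of Cominetti's argument into the $CAT(0)$ setting: a \emph{geometric} residual–transport estimate, in which the vector–space identities are replaced by metric convexity, followed by a \emph{curvature–free} probabilistic/combinatorial bound that carries over verbatim. Write $a_n := d(x_n,Tx_n)$. The first and only elementary step is monotonicity $a_{n+1}\le a_n$: since $x_{n+1}$ lies on the geodesic $[x_n,Tx_n]$ with parameter $\lambda_{n+1}$, relation \eqref{AppartenenzaAllaGeodetica} gives $d(x_{n+1},x_n)=\lambda_{n+1}a_n$ and $d(x_{n+1},Tx_n)=(1-\lambda_{n+1})a_n$, so the triangle inequality together with nonexpansiveness $d(Tx_{n+1},Tx_n)\le d(x_{n+1},x_n)$ yields $a_{n+1}\le d(x_{n+1},Tx_n)+d(Tx_n,Tx_{n+1})\le a_n$. (Kirk's theorem, recalled above, ensures $\operatorname{Fix}(T)\neq\varnothing$ and hence Fej\'er–type control, but this is not needed for the rate itself.)

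The heart of the proof is to reconstruct the Baillon--Bruck recursion using only metric data. I would introduce the two–index array of distances $d(x_i,Tx_j)$ and, unrolling the defining combination \eqref{eq:KM-Geodesic}, apply the convexity of the metric \eqref{ConvConditionCat0} (equivalently the condition (CC), \eqref{ConvCondition}) together with nonexpansiveness to transport comparison points along geodesics. The target is an inequality of the shape $a_n\le \mathrm{diam}(K)\sum_{k}\bigl|\beta_{n,k}-\beta_{n,k+1}\bigr|$, where the weights $\beta_{n,k}=\sum\prod_i\lambda_i\prod_j(1-\lambda_j)$ are precisely the point masses $\mathbb{P}(Z_n=k)$ produced by unrolling the convex combinations, $Z_n=\sum_{i=1}^{n}B_i$ being a sum of independent Bernoulli variables $B_i\sim\mathrm{Bernoulli}(\lambda_i)$. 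Boundedness of $K$ enters exactly here: every distance $d(x_i,Tx_j)$ appearing in the transport is bounded by $\mathrm{diam}(K)$.

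The remaining estimate is independent of the geometry and is imported directly from Cominetti's connection with sums of Bernoullis: the weighted sum $\sum_k\bigl|\beta_{n,k}-\beta_{n,k+1}\bigr|$ is a concentration/fluctuation functional of $Z_n$, whose variance is exactly $\mathrm{Var}(Z_n)=\sum_{i=1}^{n}\lambda_i(1-\lambda_i)=:S_n$, and the sharp bound $\sum_k\bigl|\beta_{n,k}-\beta_{n,k+1}\bigr|\le \tfrac{1}{\sqrt{\pi S_n}}$ follows from the local central limit theorem and Stirling's asymptotics for the central binomial coefficient $\binom{2m}{m}\sim 4^m/\sqrt{\pi m}$, which is where the constant $1/\sqrt{\pi}$ originates; substituting yields the claim. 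The main obstacle is the geometric step: Cominetti's transport manipulations are genuinely linear (they expand and rearrange differences $x_i-Tx_j$), so the difficulty is to realize each rearrangement as a single application of a one–sided $CAT(0)$ comparison in the correct direction and to check that nonpositive curvature never reverses an inequality nor degrades a constant. Inequality \eqref{ConvConditionCat0} supplies exactly the one–sided control required, and unique geodesics make $\oplus$ and \eqref{AppartenenzaAllaGeodetica} unambiguous; the delicate point is ensuring that the binomial weight structure survives the passage to an inequality–based argument intact, so that the probabilistic estimate applies with the same $S_n$.
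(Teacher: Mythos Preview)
Your high–level decomposition—metric convexity carries the geometric step, and a curvature–free Cominetti bound finishes—is exactly right and matches the paper's architecture. But the concrete execution you sketch diverges from the paper's and, as stated, does not reach the sharp constant.

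\medskip
\noindent\textbf{The geometric step is only asserted.} The genuine content in the $CAT(0)$ setting is precisely the ``residual–transport estimate'' you name but do not carry out. The paper does not aim at a single inequality for $a_n$ in terms of $\sum_k|\beta_{n,k}-\beta_{n,k+1}|$; instead it builds, via two one–index lemmas (applying \eqref{ConvCondition} inductively), the two–index inequality
\[
d(x_n,x_m)\ \le\ \sum_{j=0}^{m}\sum_{k=m+1}^{n}\pi_j^m\,\pi_k^n\,d(Tx_{j-1},Tx_{k-1}),
\]
and then shows by a second induction that $d(x_n,x_m)\le c_{m,n}$, where $c_{m,n}$ is defined by the \emph{same recursion} as in Cominetti's linear analysis, with boundary datum $c_{-1,k}=1$ coming from $\mathrm{diam}(K)=1$. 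Only at the very end does one read off $a_n=d(x_n,x_{n+1})/\lambda_{n+1}\le c_{n,n+1}/\lambda_{n+1}=:P_n$. The point is that the recursion for $c_{m,n}$ is purely combinatorial, so Cominetti's bound $\sqrt{S_n}\,P_n\le 1/\sqrt{\pi}$ transfers verbatim. Your ``unrolling'' paragraph gestures at this but does not pin down which inequality is applied when; the paper's Propositions organise this precisely.

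\medskip
\noindent\textbf{The probabilistic target you wrote is off by $\sqrt{2}$.} Your intermediate goal $a_n\le \mathrm{diam}(K)\sum_k|\beta_{n,k}-\beta_{n,k+1}|$ with $\beta_{n,k}=\mathbb{P}(Z_n=k)$, followed by the claim $\sum_k|\beta_{n,k}-\beta_{n,k+1}|\le 1/\sqrt{\pi S_n}$, cannot give the theorem's constant. The law of $Z_n$ is unimodal, so
\[
\sum_k|\beta_{n,k}-\beta_{n,k+1}|=2\max_k\mathbb{P}(Z_n=k),
\]
and already for $Z_n\sim\mathrm{Bin}(n,\tfrac12)$ one has $2\max_k\mathbb{P}(Z_n=k)\sim\sqrt{2/(\pi S_n)}$ by Stirling, whereas $1/\sqrt{\pi S_n}$ is strictly smaller. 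So the total–variation–of–pmf route, even if the geometric transport produced it, yields at best $\sqrt{2}\,\mathrm{diam}(K)/\sqrt{\pi S_n}$. The sharp $1/\sqrt{\pi}$ in Cominetti does \emph{not} come from an LCLT–plus–Stirling bound on this quantity; it comes from the recursion satisfied by $P_n=c_{n,n+1}/\lambda_{n+1}$ (the ``fox and hare'' object), which is a different functional. The paper imports exactly that recursion, not the pmf–difference sum.

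\medskip
In short: your plan has the right skeleton, and your monotonicity argument for $a_{n+1}\le a_n$ is correct (though not used for the rate). But to obtain the sharp constant you must replace the $\sum_k|\beta_{n,k}-\beta_{n,k+1}|$ target by Cominetti's recursive quantity $P_n$, and to get there in $CAT(0)$ you need the two–index inequality above, which is what the paper establishes step by step.
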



The proof of Theorem~\ref{thm:KM-rate} is based on a sequence of auxiliary lemmas that extend the argument of Cominetti~\cite{cominetti2014} to the nonlinear metric setting. From this point on, we assume that $(X,d)$ and $T$ satisfy the hypotheses of Theorem~\ref{thm:KM-rate}, and that $\mathrm{diam}(K) = 1$.

\begin{lemma}\label{prop:representation}
Let $(x_m)$ be the Krasnosel'ski\u{\i}--Mann iterates defined by \eqref{eq:KM-Geodesic}, and define the weights
\[
\pi^n_k := \lambda_k \prod_{j=k+1}^n (1-\lambda_j),\]
with the conventions $\lambda_0 := 1$ and $Tx_{-1} := x_0.$
Then, for all $m \geq 0$ and $q \in X$, the following inequality holds:
\begin{equation}\label{eq:representation}
d(x_m, q) \leq \sum_{j=0}^m \pi^m_j\, d(Tx_{j-1}, q).
\end{equation}
\end{lemma}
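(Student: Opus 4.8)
The plan is to establish \eqref{eq:representation} by induction on $m$, peeling off one Krasnosel'ski\u{\i}--Mann step at a time via the convexity condition (CC) of the metric, which holds in every complete $CAT(0)$ space. The engine of the argument is a one-step recursion for the weights: directly from the definition one checks that for $0 \le k < m$ the factor $(1-\lambda_m)$ splits off the product, giving $\pi^m_k = (1-\lambda_m)\,\pi^{m-1}_k$, whereas the empty product yields $\pi^m_m = \lambda_m$. As a sanity check, the same recursion together with $\sum_{k=0}^{0} \pi^0_k = \lambda_0 = 1$ shows by induction that $\sum_{k=0}^m \pi^m_k = 1$, so the $\pi^m_k$ are genuine convex weights; this is not strictly needed for the inequality but confirms the bookkeeping.

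First I would dispose of the base case $m = 0$. Here the conventions $\lambda_0 := 1$ and $Tx_{-1} := x_0$ are exactly what is needed: the right-hand side of \eqref{eq:representation} collapses to $\pi^0_0\,d(Tx_{-1}, q) = \lambda_0\, d(x_0, q) = d(x_0, q)$, so the claim holds with equality.

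For the inductive step, assume \eqref{eq:representation} at stage $m-1$. Writing the iterate as $x_m = (1-\lambda_m) x_{m-1} \oplus \lambda_m\, Tx_{m-1}$ and applying the convexity condition \eqref{ConvCondition} with $z = q$ gives
\[
d(x_m, q) \le (1-\lambda_m)\, d(x_{m-1}, q) + \lambda_m\, d(Tx_{m-1}, q).
\]
I would then bound $d(x_{m-1}, q)$ by the inductive hypothesis, distribute $(1-\lambda_m)$ into each summand, and invoke the weight recursion $(1-\lambda_m)\pi^{m-1}_j = \pi^m_j$ for $0 \le j \le m-1$ together with $\lambda_m = \pi^m_m$; the residual term $\lambda_m\, d(Tx_{m-1}, q)$ is then precisely the missing $j = m$ summand $\pi^m_m\, d(Tx_{m-1}, q)$. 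Recombining yields $\sum_{j=0}^m \pi^m_j\, d(Tx_{j-1}, q)$, which closes the induction.

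The argument carries no deep geometric difficulty, and the single geometric input is the convexity inequality \eqref{ConvCondition}, applied once per induction step. The only point requiring care is the combinatorial bookkeeping of the index shifts and of the two conventions ($\lambda_0 = 1$ and $Tx_{-1} = x_0$), which are engineered precisely so that the telescoping of the weights and the base case align.
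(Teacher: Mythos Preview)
Your proposal is correct and follows essentially the same route as the paper: induction on $m$, the base case handled via the conventions $\lambda_0=1$ and $Tx_{-1}=x_0$, and the inductive step via a single application of the convexity condition \eqref{ConvCondition} followed by the weight recursion $(1-\lambda_m)\pi^{m-1}_j=\pi^m_j$. The extra remark that $\sum_{k=0}^m \pi^m_k=1$ is a helpful sanity check but, as you note, is not needed for the argument.
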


\begin{proof}
The proof proceeds by induction on $m$.

\noindent
For $m = 0$, the claim follows immediately:
\[
d(x_0, q) = \lambda_0\, d(Tx_{-1}, q) 
= \pi^0_0\, d(Tx_{-1}, q)
= \sum_{j=0}^0 \pi^0_j\, d(Tx_{j-1}, q).
\]

\noindent
Assume that \eqref{eq:representation} holds for $m - 1$. By the definition of $x_m$ we have:
\[
d(x_m, q) = d\big((1 - \lambda_m)x_{m-1} \oplus \lambda_m Tx_{m-1},\, q\big).
\]
Applying the convexity condition of the metric (CC):
\[
d(x_m, q) \leq (1 - \lambda_m)\, d(x_{m-1}, q) + \lambda_m\, d(Tx_{m-1}, q).
\]
Using the inductive hypothesis on $d(x_{m-1}, q)$, we obtain:
\[
d(x_m, q) \leq (1 - \lambda_m) \sum_{j=0}^{m-1} \pi^{m-1}_j\, d(Tx_{j-1}, q) + \lambda_m\, d(Tx_{m-1}, q),
\]
and this coincides with:
\[
d(x_m, q) \leq \sum_{j=0}^m \pi^m_j\, d(Tx_{j-1}, q),
\]
as claimed.
\end{proof}

\begin{lemma}\label{prop:step-sum}
Let $(x_n)$ be the Krasnosel'ski\u{\i}--Mann iterates defined by \eqref{eq:KM-Geodesic}. Then, for all $m \geq 0$ and $r \geq 1$, the following inequality holds:
\[
d(x_{m+r}, x_m)\ \le\ \sum_{k=m+1}^{m+r} \pi^{m+r}_{k}\, d(Tx_{k-1}, x_m).
\]
\end{lemma}

\begin{proof}
We proceed by induction on $r$.

\noindent
For $r = 1$, using the definition of $x_{m+1}$ by (\ref{eq:KM-Geodesic}), the convexity condition of the metric (CC) and the definition of $\pi^n_k$ , we have:
\[
\begin{aligned}
d(x_{m+1}, x_m)
&= d\big((1 - \lambda_{m+1})x_m \oplus \lambda_{m+1} Tx_m,\, x_m\big) \\
&\le (1 - \lambda_{m+1})\, d(x_m, x_m) + \lambda_{m+1}\, d(Tx_m, x_m) \\
&= \sum_{k=m+1}^{m+1} \pi^{m+1}_{k}\, d(Tx_{k-1}, x_m).
\end{aligned}
\]

\noindent
Now assume the inequality holds for some $r \geq 1$. Then, using again the iterates definition and the CC of the metric,
\[
\begin{aligned}
d(x_{m+r+1}, x_m)
&= d\big((1 - \lambda_{m+r+1})x_{m+r} \oplus \lambda_{m+r+1} Tx_{m+r},\, x_m\big) \\
&\le (1 - \lambda_{m+r+1})\, d(x_{m+r}, x_m) + \lambda_{m+r+1}\, d(Tx_{m+r}, x_m).
\end{aligned}
\]
Applying the inductive hypothesis to the first term and considering that $\pi^{m+r+1}_k = (1 - \lambda_{m+r+1})\, \pi^{m+r}_k$ for $m+1 \le k \le m+r$  and $\pi^{m+r+1}_{m+r+1} = \lambda_{m+r+1}$ we obtain:
\[
\begin{aligned}
d(x_{m+r+1}, x_m)
&\le \sum_{k=m+1}^{m+r} \pi^{m+r+1}_k\, d(Tx_{k-1}, x_m) + \pi^{m+r+1}_{m+r+1}\, d(Tx_{m+r}, x_m) \\
&= \sum_{k=m+1}^{m+r+1} \pi^{m+r+1}_k\, d(Tx_{k-1}, x_m),
\end{aligned}
\]
which proves the statement for $r + 1$. The claim follows by induction.
\end{proof}

\begin{proposition}\label{prop:two-index}
Let $(x_n)$ be the Krasnosel'ski\u{\i}--Mann iterates defined by \eqref{eq:KM-Geodesic}, and let $n = m + r$ with $m \geq 0$, $r \geq 1$ (i.e., $0 \leq m < n$). Then the following estimate holds:
\begin{equation}\label{eq:two-index}
d(x_n, x_m) \ \le\ \sum_{j=0}^{m} \sum_{k=m+1}^{n} \pi^n_k\, \pi^m_j\, d\big(Tx_{j-1},\, Tx_{k-1}\big).
\end{equation}
\end{proposition}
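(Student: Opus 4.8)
The plan is to combine the two preceding results, Lemma~\ref{prop:step-sum} and Lemma~\ref{prop:representation}, in a single chaining step. First I would apply Lemma~\ref{prop:step-sum} with $n = m+r$ to obtain the outer estimate
\[
d(x_n, x_m) \ \le\ \sum_{k=m+1}^{n} \pi^{n}_{k}\, d(Tx_{k-1}, x_m).
\]
This reduces the task to controlling each individual term $d(Tx_{k-1}, x_m)$ appearing on the right-hand side, which still involves the iterate $x_m$ rather than the $T$-images we want in the final bound.

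The key step is then to unwind $x_m$ using the representation inequality. For each fixed index $k$ with $m+1 \le k \le n$, I would invoke Lemma~\ref{prop:representation} with the particular choice $q := Tx_{k-1}$. Using the symmetry of the metric to write $d(Tx_{k-1}, x_m) = d(x_m, Tx_{k-1})$, the lemma yields
\[
d(Tx_{k-1}, x_m) \ \le\ \sum_{j=0}^{m} \pi^{m}_{j}\, d(Tx_{j-1},\, Tx_{k-1}).
\]
Substituting this bound into the outer estimate and interchanging the order of the two finite summations produces exactly the double sum $\sum_{j=0}^{m}\sum_{k=m+1}^{n}\pi^n_k\,\pi^m_j\, d(Tx_{j-1},Tx_{k-1})$ asserted in the statement.

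Since the argument is merely a two-fold application of estimates already established, there is no genuine analytic obstacle here; the whole difficulty has been absorbed into the earlier lemmas, which in turn relied only on the convexity condition (CC) of the metric. The only points that require care are bookkeeping: one must check that the conventions $\lambda_0 := 1$ and $Tx_{-1} := x_0$ are applied consistently when the index $j=0$ occurs in the inner sum, and that the disjoint index ranges ($j \le m$ versus $k > m$) correctly pair each image $Tx_{k-1}$ against each image $Tx_{j-1}$ with the product weight $\pi^n_k\,\pi^m_j$. Notably, no curvature bound beyond what is already built into the two lemmas, and no completeness of $X$, is needed for this particular step.
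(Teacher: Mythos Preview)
Your proposal is correct and follows essentially the same approach as the paper: apply Lemma~\ref{prop:step-sum} to obtain the outer sum over $k$, then for each fixed $k$ invoke Lemma~\ref{prop:representation} with $q = Tx_{k-1}$ to expand $d(Tx_{k-1}, x_m)$, and substitute. The paper's proof is identical in structure and content, differing only in that it does not explicitly comment on the symmetry of the metric or the bookkeeping conventions you mention.
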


\begin{proof}
By Lemma~\ref{prop:step-sum} and using $n = m + r$, we have:
\[
d(x_n, x_m) = d(x_{m+r}, x_m) \le \sum_{k=m+1}^{m+r} \pi^{m+r}_k\, d(Tx_{k-1}, x_m).
\]
Fix $k$ in the summation and apply Lemma~\ref{prop:representation} with $q = Tx_{k-1}$ and index $m$:
\[
d(Tx_{k-1}, x_m) \le \sum_{j=0}^m \pi^m_j\, d(Tx_{j-1}, Tx_{k-1}).
\]
Substituting this into the previous estimate and observing that $n = m + r$, we obtain:
\[
d(x_n, x_m) \le \sum_{k=m+1}^{n} \pi^n_k \sum_{j=0}^m \pi^m_j\, d(Tx_{j-1}, Tx_{k-1}),
\]
which coincides with the desired inequality.
\end{proof}

\begin{proposition}\label{prop:cmn}
If for all $0\le m<n$, we define $c_{m,n}$ recursively by
\[
c_{m,n}\ :=\ \sum_{j=0}^{m}\ \sum_{k=m+1}^{n}\ \pi^m_j\,\pi^n_k\,c_{j-1,k-1},
\quad\text{with } \ c_{-1,n}:=1 \ \text{ for all } n\ge 0.
\]
Then, for all $0\le m<n$,
\begin{equation}\label{eq:cmn-bound}
d(x_n,x_m)\ \le\ c_{m,n}.
\end{equation}
\end{proposition}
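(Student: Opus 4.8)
The plan is to establish $d(x_n,x_m)\le c_{m,n}$ by strong induction on the upper index $n$. The key structural observation is that the recursion defining $c_{m,n}$ only ever calls values $c_{j-1,k-1}$ with $k-1\le n-1<n$, so the upper index strictly decreases at each step; this singles out $n$ as the natural induction variable, while the convention $c_{-1,\cdot}:=1$ serves as the boundary condition encoding the normalization $\mathrm{diam}(K)=1$. Concretely, the induction hypothesis will be that $d(x_{n'},x_{m'})\le c_{m',n'}$ holds for all admissible pairs $0\le m'<n'<n$.

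For the inductive step, fix $0\le m<n$ and start from Proposition~\ref{prop:two-index}, which gives
\[
d(x_n,x_m)\ \le\ \sum_{j=0}^{m}\sum_{k=m+1}^{n}\pi^n_k\,\pi^m_j\,d(Tx_{j-1},Tx_{k-1}).
\]
The core of the argument is the termwise bound $d(Tx_{j-1},Tx_{k-1})\le c_{j-1,k-1}$ for every index pair in the double sum (so $0\le j\le m$ and $m+1\le k\le n$), which I would obtain by a case split. If $j\ge1$, nonexpansivity of $T$ yields $d(Tx_{j-1},Tx_{k-1})\le d(x_{j-1},x_{k-1})$, and since $0\le j-1<k-1$ (because $j\le m<m+1\le k$) and $k-1\le n-1<n$, the induction hypothesis applied to the pair $(j-1,k-1)$ gives $d(x_{j-1},x_{k-1})\le c_{j-1,k-1}$. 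If $j=0$, then $Tx_{j-1}=Tx_{-1}=x_0$, and since both $Tx_{-1},Tx_{k-1}\in K$ we get $d(Tx_{-1},Tx_{k-1})\le\mathrm{diam}(K)=1=c_{-1,k-1}$ directly from the convention. Substituting these bounds into the double sum reproduces exactly the defining recursion for $c_{m,n}$, closing the induction.

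The base case $n=1$ (which forces $m=0$) is subsumed by this argument: the double sum then contains only the term $j=0,\,k=1$, which falls under the second case and is bounded by $c_{-1,0}=1$ without any appeal to the (vacuous) hypothesis—so the degenerate index $j=0$ is precisely what anchors the recursion. The proof is otherwise routine, and I expect the only delicate point to be the index bookkeeping: one must verify, each time the induction hypothesis is invoked (the case $j\ge1$), that the resulting pair $(j-1,k-1)$ is admissible, i.e. $j-1<k-1$ so that $c_{j-1,k-1}$ is a legitimate entry, and $k-1<n$ so that it lies at a strictly lower level of the induction, while checking that the single boundary case $j=0$ is correctly absorbed by $c_{-1,\cdot}=1$ via the diameter normalization rather than via nonexpansivity.
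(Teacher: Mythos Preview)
Your proposal is correct and follows essentially the same route as the paper: strong induction on the upper index $n$, invoking Proposition~\ref{prop:two-index}, then splitting the double sum into the boundary case $j=0$ (handled via $Tx_{-1}=x_0$, the diameter normalization, and the convention $c_{-1,\cdot}=1$) and the generic case $j\ge1$ (handled via nonexpansivity of $T$ plus the induction hypothesis on the pair $(j-1,k-1)$). Your index bookkeeping is in fact slightly more careful than the paper's own writeup.
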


\begin{proof}
We proceed by induction on $n$. Fix $n>m\ge 0$ and assume that
\[
d(x_{k},x_{j})\ \le\ c_{j,k}\qquad\text{for all } 0\le j<k\le n-1.
\]
By Proposition~\ref{prop:two-index} (i.e., \eqref{eq:two-index}),
\[
d(x_n,x_m)\ \le\ \sum_{j=0}^{m}\ \sum_{k=m+1}^{n}\ \pi^m_j\,\pi^n_k\ d\big(Tx_{j-1},\,Tx_{k-1}\big).
\]
We split the double sum to treat separately the case $j=0$ and the case $1\le j\le m$.
\[
\sum_{j=0}^{m}\ \sum_{k=m+1}^{n}\ \pi^m_j\,\pi^n_k\ d\big(Tx_{j-1},\,Tx_{k-1}\big) 
\]
\[
= \sum_{k=m+1}^{n}\ \pi^m_0\,\pi^n_k\ d\big(Tx_{-1},\,Tx_{k-1}\big) 
+\sum_{j=1}^{m}  \sum_{k=m+1}^{n}\ \pi^m_j\,\pi^n_k\ d\big(Tx_{j-1},\,Tx_{k-1}\big).
\]

Recalling $Tx_{-1}=x_0$, $c_{-1,n}:=1$ and using that $d\big(x_0,\,Tx_{k-1}\big) \leq diam (K) =1 $ we obtain for the first term:
\[
\sum_{k=m+1}^{n}\ \pi^m_0\,\pi^n_k\ d\big(x_0,\,Tx_{k-1}\big) \leq \sum_{k= m+1}^{n}\ \pi^m_0\,\pi^n_k\ c_{-1,k-1}.
\]

As for the second term, using the nonexpansiveness of $T$ and the induction hypothesis,
\[
d\big(Tx_{j-1},Tx_{k-1}\big)\ \le\ d(x_{j-1},x_{k-1})\ \le\ c_{j-1,k-1}.
\]
We then obtain:
\[
\sum_{j=1}^{m}\ \sum_{k=m+1}^{n}\ \pi^m_j\,\pi^n_k\ d\big(Tx_{j-1},\,Tx_{k-1}\big) \leq\sum_{j=1}^{m} \sum_{k=m+1}^{n}\ \pi^m_j\,\pi^n_k\ c_{j-1,k-1}
\]

Summing both terms yields
\[
d(x_n,x_m)\ \le\ \sum_{k=m+1}^{n}\ \pi^m_0\,\pi^n_k\ c_{-1,k-1} +\sum_{j=1}^{m} \sum_{k=m+1}^{n}\ \pi^m_j\,\pi^n_k\,c_{j-1,k-1}
\ =\ c_{m,n},
\]
which proves \eqref{eq:cmn-bound} and completes the induction.
\end{proof}

\begin{corollary}\label{cor:pn}
For every $n\ge 0$, the Krasnosel'ski\u{\i}--Mann iterates satisfy
\[
d(x_n,Tx_n) \ \le \ \frac{c_{n,n+1}}{\lambda_{n+1}} \ =:\ P_n.
\]
\end{corollary}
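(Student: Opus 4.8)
The plan is to reduce the statement to a direct application of Proposition~\ref{prop:cmn}, after rewriting $d(x_n, Tx_n)$ in terms of the consecutive-iterate distance $d(x_{n+1}, x_n)$. The key observation is that the new iterate $x_{n+1}$ is, by construction, the point of the geodesic segment joining $x_n$ and $Tx_n$ at parameter $\lambda_{n+1}$, so the membership relation \eqref{AppartenenzaAllaGeodetica} applies verbatim and furnishes an \emph{exact} distance, not merely an inequality.

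First I would invoke the definition \eqref{eq:KM-Geodesic} of the iteration, namely $x_{n+1} = (1-\lambda_{n+1}) x_n \oplus \lambda_{n+1} T x_n$. By the characterization \eqref{AppartenenzaAllaGeodetica} of a point on a geodesic segment, taken with $t = \lambda_{n+1}$, endpoints $x = x_n$ and $y = Tx_n$, this yields the identity
\[
d(x_{n+1}, x_n) = \lambda_{n+1}\, d(x_n, Tx_n).
\]
Since $\lambda_{n+1} \in (0,1)$ is strictly positive, I can divide through and rearrange to obtain
\[
d(x_n, Tx_n) = \frac{d(x_{n+1}, x_n)}{\lambda_{n+1}}.
\]

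It then remains only to bound the numerator. Applying Proposition~\ref{prop:cmn} with $m = n$ and upper index $n+1$, a choice that meets the admissibility requirement $0 \le n < n+1$, gives $d(x_{n+1}, x_n) \le c_{n,n+1}$. Substituting this into the identity above produces
\[
d(x_n, Tx_n) = \frac{d(x_{n+1}, x_n)}{\lambda_{n+1}} \le \frac{c_{n,n+1}}{\lambda_{n+1}} = P_n,
\]
which is exactly the claimed estimate.

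As for difficulty, I expect no genuine obstacle here: the corollary is essentially a bookkeeping step, since all the analytic content has already been absorbed into Proposition~\ref{prop:cmn} (and, through it, the recursive construction of the $c_{m,n}$). The only point requiring care is to confirm that $d(x_{n+1}, x_n) = \lambda_{n+1}\, d(x_n, Tx_n)$ holds with equality; this is guaranteed because $x_{n+1}$ is \emph{defined} to lie on $[x_n, Tx_n]$, so \eqref{AppartenenzaAllaGeodetica} pins down its distance to each endpoint precisely rather than up to the convexity bound used in the earlier lemmas.
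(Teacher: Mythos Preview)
Your proposal is correct and follows exactly the same approach as the paper: use the geodesic identity $d(x_{n+1},x_n)=\lambda_{n+1}\,d(x_n,Tx_n)$ from the definition of the iterate, invoke Proposition~\ref{prop:cmn} with $m=n$ to bound $d(x_{n+1},x_n)\le c_{n,n+1}$, and divide by $\lambda_{n+1}$. Your write-up is simply more explicit about why the first relation is an equality.
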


\begin{proof}
By definition, we have \(d(x_n,x_{n+1}) \ = \ \lambda_{n+1}\, d(x_n,Tx_n)\), and by Proposition~\ref{prop:cmn} it holds \( d(x_n,x_{n+1}) \le c_{n,n+1}\).
Dividing both sides by $\lambda_{n+1}$ completes the proof.
\end{proof}

\begin{remark}\label{rem:probabilistic}
Using the probabilistic interpretation of the coefficients $\pi^n_k$ (see~\cite{cominetti2014}), one obtains the estimate:
\[
\sqrt{\;\sum_{i=1}^n \lambda_i(1-\lambda_i)} \;\, P_n \ \le \ \frac{1}{\sqrt{\pi}}.
\]

We observe that the construction of the weights $\pi^n_k$ is purely algebraic and does not depend on the geometry of the underlying space. As a consequence, the probabilistic estimate from~\cite{cominetti2014} extends directly to the $CAT(0)$ setting.

Combining this inequality with Corollary~\ref{cor:pn} immediately yields the bound stated in Theorem~\ref{thm:KM-rate}.
\end{remark}


\subsection{Halpern/Viscosity Iteration in Complete $CAT(0)$ Spaces: an \(O(1/k)\) Asymptotic Regularity Bound}

Throughout this subsection, let $(X,d)$ be a complete $CAT(0)$ space, and let $K \subset X$ be nonempty, closed, and convex. 
Assume that $T: K \to K$ is nonexpansive with $\text{Fix}(T) \neq \emptyset$, and that $f: K \to K$ is a $\beta$-contraction for some $\beta \in [0,1)$.

Given a sequence $(\alpha_k)_{k \ge 1} \subset (0,1]$, we consider the viscosity-type Halpern iteration defined by
\begin{equation}\label{eq:viscHalpern}
    x_{k+1} = \alpha_{k+1}\, f(x_k) \oplus (1 - \alpha_{k+1})\, T(x_k), \qquad x_0 \in K
\end{equation}
which is the nonlinear counterpart of \eqref{eq:Viscosity iteration} and for \(f\equiv u\) it coincides with iteration \eqref{eq:HalpernGeodesic}.
\begin{lemma}\label{lem:bounded}
Set \(\bar{x} \in \text{Fix}(T)\), and set \(z := f(\bar{x})\). Then the sequence \((x_k)\) generated by \eqref{eq:viscHalpern} is bounded and, for all \(k \ge 0\),
\[
d(x_k, \bar{x}) \ \le\ C_{\bar{x}}
\quad\text{with}\quad
C_{\bar{x}} := \max\!\Big\{\,d(x_0, \bar{x}),\ \tfrac{1}{1 - \beta}\, d(z, \bar{x}) \Big\}.
\]
\end{lemma}

\begin{proof}
Using the metric convexity in $CAT(0)$ and the nonexpansiveness of \(T\), we have:
\begin{align*}
d(x_{k+1}, \bar{x}) 
&\le\ (1 - \alpha_{k+1})\, d(Tx_k, \bar{x}) + \alpha_{k+1}\, d\big(f(x_k), \bar{x}\big)\\
&\le\ (1 - \alpha_{k+1})\, d(x_k, \bar{x}) + \alpha_{k+1} \big(\beta\, d(x_k, \bar{x}) + d(z, \bar{x})\big),
\end{align*}
hence \(d(x_{k+1}, \bar{x}) \le d(x_k, \bar{x}) + \alpha_{k+1}\, d(z, \bar{x}) - \alpha_{k+1}(1 - \beta)\, d(x_k, \bar{x})\).

If \(d(x_k, \bar{x}) \ge \tfrac{1}{1 - \beta} d(z, \bar{x})\), the last term is nonpositive and \(d(x_{k+1}, \bar{x}) \le d(x_k, \bar{x})\); otherwise, \(d(x_{k+1}, \bar{x}) \le \tfrac{1}{1 - \beta} d(z, \bar{x})\). By induction, the claim follows.
\end{proof}

\begin{lemma}
(see~\cite{sabach2017first})
\label{lem:summation}
Let \(M>0\) and \(\gamma\in(0,1]\).
Assume that \((a_k)\subset[0,\infty)\) satisfies \(a_1\le M\) and
\[
a_{k+1} \le (1-\gamma b_{k+1})\,a_k + (b_k-b_{k+1})\,c_k,\qquad k\ge 1,
\]
where
\[
b_k := \min\!\left\{\frac{2}{\gamma k},\,1\right\}
\quad\text{and}\quad
0\le c_k \le M <\infty \ \ \text{for all } k.
\]
Then, for all \(k\ge 1\),
\[
a_k \le \frac{M\,J}{\gamma k},
\qquad
J:=\left\lceil\frac{2}{\gamma}\right\rceil.
\]
\end{lemma}

\begin{remark}
By the nonexpansiveness of \(T\) and the contraction property of \(f\), it follows that for every \(k \ge 1\) and \(\bar{x} \in \text{Fix}(T)\), defining \(z := f(\bar{x})\), the following inequalities hold:
\[
d\bigl(Tx_{k-1}, \bar{x} \bigr) \le d\bigl(x_{k-1}, \bar{x} \bigr),
\qquad
d\bigl(f(x_{k-1}), z \bigr) \le \beta\, d\bigl(x_{k-1}, \bar{x} \bigr).
\]
\end{remark}

\begin{theorem}\label{thm:rateViscIP}
Assume that \(\alpha_k := \min\!\big\{ \tfrac{2}{(1-\beta)k},\,1 \big\}\), and let 
$C_{\bar{x}}$ and $J$ be the constants defined in Lemmas \ref{lem:bounded} and \ref{lem:summation}, respectively. Then the successive differences \(d(x_k,x_{k-1})\) and the asymptotic regularity residuals \(d(Tx_{k-1},x_{k-1})\) admit bounds of order \(O(1/k)\).

In particular, the following estimates hold:
\[
d(x_k, x_{k-1}) \ \le\ \frac{2J\, C_{\bar{x}}}{(1-\beta)\,k}
\quad \text{and} \quad 
d(Tx_{k-1}, x_{k-1}) \ \le\ \frac{2\, C_{\bar{x}}(J+2)}{(1-\beta)\,k},
\qquad (k \ge 1).
\]
\end{theorem}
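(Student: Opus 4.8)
The plan is to control the one-step displacement $a_k := d(x_k, x_{k-1})$ by a recursive inequality of exactly the form handled by Lemma~\ref{lem:summation}, and then to transfer the resulting $O(1/k)$ bound to the asymptotic-regularity quantity $d(Tx_{k-1}, x_{k-1})$.

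First I would establish the key recursion
\[
d(x_{k+1}, x_k) \le \bigl(1 - (1-\beta)\alpha_{k+1}\bigr)\, d(x_k, x_{k-1}) + |\alpha_{k+1} - \alpha_k|\, d\bigl(f(x_{k-1}), T(x_{k-1})\bigr).
\]
In the linear setting this follows from an add-and-subtract manipulation, which is unavailable in a $CAT(0)$ space; the metric substitute is to introduce the auxiliary point $y_{k+1} := \alpha_{k+1} f(x_{k-1}) \oplus (1-\alpha_{k+1}) T(x_{k-1})$, which shares the convex weight $\alpha_{k+1}$ with $x_{k+1}$ but uses the inputs of step $k-1$, and then to split $d(x_{k+1},x_k) \le d(x_{k+1}, y_{k+1}) + d(y_{k+1}, x_k)$. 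For the first term, $x_{k+1}$ and $y_{k+1}$ are the points at the common parameter $\alpha_{k+1}$ on the geodesics $[T(x_k), f(x_k)]$ and $[T(x_{k-1}), f(x_{k-1})]$, so the convexity of the distance in $CAT(0)$ spaces \eqref{ConvConditionCat0} bounds it by $(1-\alpha_{k+1})\, d(Tx_k, Tx_{k-1}) + \alpha_{k+1}\, d(f(x_k), f(x_{k-1}))$, which by nonexpansiveness of $T$ and $\beta$-contractivity of $f$ is at most $\bigl(1 - (1-\beta)\alpha_{k+1}\bigr)\, d(x_k, x_{k-1})$. For the second term, $y_{k+1}$ and $x_k$ lie on the \emph{same} geodesic segment $[T(x_{k-1}), f(x_{k-1})]$, at parameters $\alpha_{k+1}$ and $\alpha_k$, so their distance is exactly $|\alpha_{k+1}-\alpha_k|\, d(f(x_{k-1}), T(x_{k-1}))$. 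I expect this splitting to be the main obstacle: choosing the right intermediate point is precisely what replaces the telescoping identity of the Euclidean proof.

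Next I would bound the perturbation factor uniformly. Using Lemma~\ref{lem:bounded} together with the preceding remark, both $d(f(x_{k-1}), \bar{x}) \le \beta\, d(x_{k-1}, \bar{x}) + d(z, \bar{x}) \le C_{\bar{x}}$ and $d(T(x_{k-1}), \bar{x}) \le d(x_{k-1}, \bar{x}) \le C_{\bar{x}}$, so by the triangle inequality $d(f(x_{k-1}), T(x_{k-1})) \le 2C_{\bar{x}} =: M$. Since $\alpha_k = \min\{2/((1-\beta)k),\,1\}$ is nonincreasing, $|\alpha_{k+1}-\alpha_k| = \alpha_k - \alpha_{k+1}$, and the recursion becomes $a_{k+1} \le \bigl(1 - (1-\beta)\alpha_{k+1}\bigr) a_k + (\alpha_k - \alpha_{k+1})\, c_k$ with $c_k \le M$. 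This is precisely the hypothesis of Lemma~\ref{lem:summation} with $b_k = \alpha_k \in (0,1]$ nonincreasing, $\gamma = 1-\beta \in (0,1]$, $M = 2C_{\bar{x}}$, and $J = \lceil 2/(1-\beta)\rceil$; applying it bounds $d(x_k, x_{k-1})$ by a constant multiple of $\alpha_k$, yielding the claimed first estimate of order $O(1/k)$.

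Finally, for the asymptotic-regularity bound I would exploit that $x_k = \alpha_k f(x_{k-1}) \oplus (1-\alpha_k) T(x_{k-1})$ sits on the geodesic $[T(x_{k-1}), f(x_{k-1})]$ at parameter $\alpha_k$, whence $d(x_k, T(x_{k-1})) = \alpha_k\, d(f(x_{k-1}), T(x_{k-1})) \le 2C_{\bar{x}}\,\alpha_k$. A triangle inequality $d(T(x_{k-1}), x_{k-1}) \le d(T(x_{k-1}), x_k) + d(x_k, x_{k-1})$ then combines this term with the first estimate to give the second bound $d(Tx_{k-1}, x_{k-1}) \le \frac{2C_{\bar{x}}(J+2)}{(1-\beta)k}$. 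The only delicate points beyond the recursion are verifying the monotonicity and range conditions required to invoke Lemma~\ref{lem:summation}, and the careful bookkeeping of constants so that the two displayed bounds come out with exactly the stated numerators.
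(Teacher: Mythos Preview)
Your proposal is correct and follows essentially the same route as the paper: you introduce the same auxiliary point $y_{k+1}=\alpha_{k+1}f(x_{k-1})\oplus(1-\alpha_{k+1})T(x_{k-1})$, bound the two resulting terms via the $CAT(0)$ convexity inequality and the geodesic property respectively, show $d(f(x_{k-1}),T(x_{k-1}))\le 2C_{\bar x}$, and then invoke Lemma~\ref{lem:summation} before deducing the asymptotic-regularity bound by the same triangle-inequality argument. The only cosmetic differences are that you note term~(II) is an equality (which it is) and you route the $2C_{\bar x}$ bound through $\bar x$ rather than through $z$ and $\bar x$ separately.
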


\begin{proof}

By the triangle inequality, and recalling the definitions of the iterates \(x_k\) and \(x_{k+1}\), we obtain:
\[
\begin{aligned} 
    d(x_{k+1},x_k) &\le d\bigl(x_{k+1},\alpha_{k+1}f(x_{k-1})\oplus(1-\alpha_{k+1})T x_{k-1}\bigr)\\
    & +d\bigl(\alpha_{k+1}f(x_{k-1})\oplus(1-\alpha_{k+1})T x_{k-1},x_k\bigr)\\
    \\
    &=d\bigl(\alpha_{k+1}f(x_k)\oplus(1-\alpha_{k+1})T x_k,\alpha_{k+1}f(x_{k-1})\oplus(1-\alpha_{k+1})T x_{k-1}\bigr) \qquad\qquad(I) \\
    &+d\bigl(\alpha_{k+1}f(x_{k-1})\oplus(1-\alpha_{k+1})T x_{k-1},\;\alpha_k f(x_{k-1})\oplus(1-\alpha_k)T x_{k-1}\bigr)\qquad\qquad(II) 
\end{aligned}
\]
 
Using first the convexity inequality \eqref{ConvConditionCat0} for $CAT(0)$ spaces, and then the nonexpansiveness of $T$ together with the $\beta$-contraction of $f$, we obtain for term \((\mathrm{I})\):
\[
\begin{aligned}
(I)
&\le (1-\alpha_{k+1})\,d(Tx_k,Tx_{k-1})
   + \alpha_{k+1}\,d(f(x_k),f(x_{k-1})) \\
&\le (1-\alpha_{k+1}+\alpha_{k+1}\beta)\,d(x_k,x_{k-1}).
\end{aligned}
\]
For term \((\mathrm{II})\), using the convexity condition of the metric (CC) and the geodesic property \eqref{AppartenenzaAllaGeodetica} (both points lie on the segment \([f(x_{k-1}), T x_{k-1}]\)), we have:
\[
(II) \leq (\alpha_k-\alpha_{k+1})\,d\bigl(f(x_{k-1}),\,T x_{k-1}\bigr).
\]

Substituting the bounds for \((\mathrm{I})\) and \((\mathrm{II})\) into the previous estimate, we obtain

\begin{equation}\label{eq:key-recursion}
\begin{aligned}
d(x_{k+1},x_k)
&\leq (1-\alpha_{k+1}(1-\beta))\,d(x_k,x_{k-1}) \\
&\quad +(\alpha_k-\alpha_{k+1})\,d\bigl(f(x_{k-1}),T x_{k-1}\bigr).
\end{aligned}
\end{equation}
Now let \(\bar{x} \in \text{Fix}(T)\) and set \(z := f(\bar{x})\). 
Applying first the triangle inequality, and then the \(\beta\)-contraction property of \(f\) and the nonexpansiveness of \(T\), we have
\[
\begin{aligned}
d\bigl(f(x_{k-1}),\,T x_{k-1}\bigr)&\le d\bigl(f(x_{k-1}),z\bigr)\;+\;d\bigl(z,\bar x\bigr)\;+\;d\bigl(\bar x,Tx_{k-1}\bigr)\\
&=\;d\bigl(f(x_{k-1}),f(\bar x)\bigr)\;+\;d\bigl(f(\bar x),\bar x\bigr)\;+\;d\bigl(T\bar x,Tx_{k-1}\bigr)\\
&\le \;\beta\,d(x_{k-1},\bar x)\;+\;d\bigl(f(\bar x),\bar x\bigr)\;+\;d(\bar x,x_{k-1})\\
\end{aligned}\]

Finally, by Lemma~\ref{lem:bounded} it holds \( d(x_{k-1}, \bar{x}) \le C_{\bar{x}}\) and 
\(d\bigl(f(\bar{x}), \bar{x}\bigr) \le (1-\beta)\, C_{\bar{x}}\), hence we obtain
\[
d\bigl(f(x_{k-1}),\,T x_{k-1}\bigr)\le\;\beta\,C_{\bar x}\;+\;(1-\beta)\,C_{\bar x}\;+\;C_{\bar x}
\;=\;2\,C_{\bar x}.
\]

By setting $a_k := d(x_k, x_{k-1})$, $b_k := \alpha_k$, $\gamma := 1-\beta$, $ M := 2 C_{\bar x}$ and $c_k := d\bigl(f(x_{k-1}), T x_{k-1}\bigr)$,
and substituting into \eqref{eq:key-recursion}, we obtain
\[ a_{k+1} \le (1-\gamma b_{k+1})\, a_k \;+\; (b_k - b_{k+1})\, c_k, \qquad (k \ge 1), \quad c_k \le M.\]
Moreover, by Lemma~\ref{lem:bounded},
\[
a_1=d(x_1,x_0)\le d(x_1,\bar x)+d(x_0,\bar x)\le 2C_{\bar x}=M.
\]
Therefore, by Lemma~\ref{lem:summation},
\[ d(x_k, x_{k-1}) \ \le\ \frac{2\, J\, C_{\bar x}}{(1-\beta)\, k}\,.\]
This completes the proof of the first claim regarding the convergence rate of \eqref{eq:viscHalpern}.

We now turn to its asymptotic regularity bound. Using the triangle inequality and the fact that \(x_k \in [f(x_{k-1}), T x_{k-1}]\), we obtain
\[
d(Tx_{k-1}, x_{k-1}) \le d(x_k, T x_{k-1}) + d(x_k, x_{k-1})
= \alpha_k\, d(Tx_{k-1}, f(x_{k-1})) + d(x_k, x_{k-1}).
\]
Recalling the definition of \(\alpha_{k}\), together with the bounds 
\(d(f(x_{k-1}), T x_{k-1}) \le 2C_{\bar x}\) and the first claim
\(d(x_k, x_{k-1}) \le \tfrac{2J\, C_{\bar x}}{(1-\beta)k}\), we deduce
\[
d(Tx_{k-1}, x_{k-1}) \le \frac{2}{k(1-\beta)}\,2C_{\bar x} + \frac{2J C_{\bar x}}{(1-\beta)k}
= \frac{2C_{\bar x}(J+2)}{(1-\beta)k}.
\]
Hence the second claim also holds, and both sequences \(d(x_k,x_{k-1})\) and 

\(d(Tx_{k-1},x_{k-1})\) decay at rate \(O(1/k)\).

\end{proof}

\medskip
This completes the analysis of the viscosity (Halpern-type) scheme in complete $CAT(0)$ spaces. 
Together with the results obtained for the Krasnosel'ski\u{\i}--Mann iteration, this demonstrates that the proof strategies from the linear setting can be carried over to the nonlinear framework, yielding in particular the known \(O(1/\sqrt{k})\) and \(O(1/k)\) asymptotic regularity bounds for the two fundamental fixed-point algorithms.

\section{Hyperbolic HalpernGD-based Optimizer}
\label{sec:iperbolicHalpernGD}
In this section, we formulate a HalpernGD-type scheme on Hadamard spaces whose specialization to \(\mathbb H^n\) yields the Hyperbolic HalpernGD optimizer. The key idea is to replace the Euclidean forward step with the proximal (resolvent) mapping, which remains nonexpansive in the metric setting.
The properties of the proximal mapping recalled in Section~\ref{sec:preliminari} naturally motivate this construction.
In the Euclidean setting, the standard HalpernSGD iteration exploits the nonexpansivity of the forward operator 
$F = I - \eta\nabla f$, whose averaged nature is guaranteed by the Baillon–Haddad Theorem~\ref{Baillon-Haddad}.  
However, in nonlinear spaces it does not hold in general, and we provided a counterexample.
\subsection{Failure of nonexpansivity of the forward operator on \(\mathbb H^2\)}
To the best of our knowledge, explicit examples of nonexpansive forward operators in non-Euclidean settings have not been investigated in the literature. The present section aims to fill this gap. 

In particular, we  will point out that, even on the hyperbolic plane, geodesic convexity and smoothness do not suffice to guarantee the nonexpansivity of the explicit forward step.\\
Let \(X\) be a general Hadamard manifold, let \(f\in C^1(X)\), and let \(\eta>0\); then the associated forward operator is defined by
\[
F_\eta(x):=\exp_x\bigl(-\eta \nabla f(x)\bigr), \qquad x\in X.
\]
Moreover, a function \(f\in C^2(X)\) is geodesically convex (see~Theorem $6.2$ in ~\cite{udriste94}) if and only if
\[
\operatorname{Hess}f_x(v,v)\ge 0
\qquad
\forall x\in X,\ \forall v\in T_xX.
\]
\begin{theorem}
\label{thm:forward-not-nonexpansive}
There exists a \(C^\infty\) geodesically convex function
\(f:\mathbb{H}^2\to\mathbb{R}\) with nonempty minimizer set such that, for every \(\eta>0\), the map \( F_\eta(x)=\exp_x(-\eta\nabla f(x)) \)
is not nonexpansive. Equivalently, for every \(\eta>0\) there exist \(x,y\in\mathbb H^2\) such that
\[
d_H\bigl(F_\eta(x),F_\eta(y)\bigr)>d_H(x,y).
\]
\end{theorem}
\begin{proof}
We work in the hyperboloid model of \(\mathbb H^2\) and use the global Fermi coordinates \((r,s)\in\mathbb R^2\) around the geodesic
$\Gamma=\{(\cosh s,\sinh s,0):s\in\mathbb R\}$,
given by
\[
\Phi(r,s):=(\cosh r\,\cosh s,\ \cosh r\,\sinh s,\ \sinh r).
\]
In these coordinates, the hyperbolic metric is
\(
\Phi^*g_H=dr^2+\cosh^2(r)\,ds^2.
\)
\\
Define \(f:\mathbb H^2\to\mathbb R\) by
\(
f(\Phi(r,s)):=\cosh r-1.
\)
Then \(f\ge 0\), and \(f=0\) if and only if \(r=0\). Hence
\(
\arg\min f=\Gamma.
\) Moreover,
\[
df=\sinh r\,dr,
\qquad
\nabla f=\sinh r\,\partial_r.
\]
and, by direct computation, we have:
\[
\operatorname{Hess}f(\partial_r,\partial_r)=\cosh r,
\quad
\operatorname{Hess}f(\partial_r,\partial_s)=0,
\quad
\operatorname{Hess}f(\partial_s,\partial_s)=\sinh^2 r\,\cosh r.
\]
Therefore \(\operatorname{Hess}f\) is positive semidefinite, and thus \(f\) is geodesically convex.
Fix \(\eta>0\). Since \(\displaystyle \frac{\sinh r}{r}\to+\infty\) as \(r\to+\infty\), we may choose \(r>0\) such that
\[
\eta \sinh r > 2r.
\]
Let \(a>0\), and set $x:=\Phi(r,0)$, $y:=\Phi(r,a).$
For fixed \(s\in\mathbb R\), consider the curve
\[
\gamma(t):=\Phi(r-t\eta\sinh r,s),\qquad t\in\mathbb R.
\]
This is a geodesic, since it is an affine reparametrization of the curves 
\(u\mapsto \Phi(r-u,s)\) that are unit-speed geodesics orthogonal to $\Gamma$. Moreover,
\[
\gamma(0)=\Phi(r,s),
\qquad
\gamma'(0)=-\eta\sinh r\,\partial_r.
\]
From $\nabla f(\Phi(r,s))=\sinh r\,\partial_r$, it follows that
\[
F_\eta(\Phi(r,s))
=
\exp_{\Phi(r,s)}(-\eta\sinh r\,\partial_r)
=
\gamma(1)
=
\Phi(r-\eta\sinh r,s).
\]
Therefore, $F_\eta(x)=\Phi(r-\eta\sinh r,0)$ and $F_\eta(y)=\Phi(r-\eta\sinh r,a)$.
\\
By the hyperboloid distance formula, it follows that
\[
\cosh d_H(x,y)
=
-\langle \Phi(r,0),\Phi(r,a)\rangle_L
=
1+\cosh^2 r\,(\cosh a-1),
\]
and analogously
\[
\cosh d_H(F_\eta(x),F_\eta(y))
=
1+\cosh^2(r-\eta\sinh r)\,(\cosh a-1).
\]
By the choice of \(r\), we have
\[
r-\eta\sinh r<-r,
\qquad\text{hence}\qquad
|r-\eta\sinh r|>r.
\]
Since \(\cosh\) is even and strictly increasing on \([0,+\infty)\), it follows that
\[
\cosh^2(r-\eta\sinh r)>\cosh^2 r.
\]
From \(\cosh a-1>0\), it follows that
\[
\cosh d_H(F_\eta(x),F_\eta(y))
>
\cosh d_H(x,y).
\]
Finally, since \(\operatorname{arcosh}\) is strictly increasing on \([1,+\infty)\),
\[
d_H(F_\eta(x),F_\eta(y))>d_H(x,y).
\]
Since \(\eta>0\) was arbitrary, this proves that for every \(\eta>0\) there exist
\(x,y\in\mathbb H^2\) such that
\[
d_H(F_\eta(x),F_\eta(y))>d_H(x,y).
\]
\end{proof}
\begin{remark}
The previous counterexample shows that geodesic convexity alone is not sufficient to guarantee nonexpansiveness of the explicit forward step on Hadamard manifolds. Moreover, since $f$ is $C^2$, its Hessian is bounded on every closed geodesic ball; hence $\nabla f$ is Lipschitz on every bounded convex region. Therefore, local smoothness on bounded convex subsets does not suffice either. Thus, in nonpositively curved geometry, the proximal mapping is not merely a convenient substitute for the forward operator, but in general the natural nonexpansive object. 
\end{remark}

In this context, the proximal (or resolvent) mapping introduced in Definition~\ref{def:proxHadamard} provides a geometric analogue of the forward step, preserving nonexpansivity in the metric sense.

\medskip
Following Bačák~\cite[Ch.~2, §2.2, Prop.~2.2.24, p.~44; Ch.~4, §4.2, Lemma~4.2.2, p.~73]{bacak2014convex}, 
the proximal mapping \( J_{\lambda f}(x) \) is well defined in every Hadamard space $X$:  by~\cite[Prop.~2.2.24]{bacak2014convex}, the minimizer exists and is unique for each $x\in X$; by~\cite[Lemma~4.2.2]{bacak2014convex}, the operator $J_{\lambda f}$ is both 1-Lipschitz and firmly nonexpansive.
Moreover, as shown in~\cite[Eq.~(2.2.24)]{bacak2014convex},
\[
x = J_{\lambda f}(x)
\quad\Longleftrightarrow\quad
x \in \arg\min f,
\]
so that the fixed points of $J_{\lambda f}$ coincide with the minimizers of $f$.  
This equivalence establishes $J_{\lambda f}$ as the natural nonlinear generalization of the Euclidean forward operator.

\medskip
Assume in addition that \(\arg\min f\neq\varnothing\).
Replacing the Euclidean forward step with the resolvent \(J_{\lambda f}\) naturally leads to a Hadamard-space analogue of the Halpern iteration; in particular, this yields the desired hyperbolic version when \(X=\mathbb H^n\). For any anchor point \(u\in X\) and any control sequence \(\{\alpha_k\}\subset(0,1)\), we obtain:
\begin{equation} \label{eq:hyperbolicHalpern}
    x_{k+1} = \alpha_{k+1} u \oplus (1-\alpha_{k+1}) J_{\lambda f}(x_k), \qquad x_0\in X.
\end{equation}
Since \(J_{\lambda f}\) is firmly nonexpansive, it is in particular nonexpansive. Hence, under the standard assumptions on \((\alpha_k)\), the sequence \((x_k)\) is asymptotically regular and converges strongly to the metric projection of \(u\) onto
\[
Fix(J_{\lambda f})=\arg\min f.
\]

\medskip
Iteration~\eqref{eq:hyperbolicHalpern} thus provides a theoretical foundation for a Hyperbolic Halpern--GD optimizer; indeed, convergence and quantitative results for related Halpern-type proximal point algorithms have already been established in Hilbert spaces and, more generally, in \(CAT(0)\) spaces \cite{leustean2021quantitative,sipos2022abstract}.  
It combines the geometric advantages of Hadamard spaces, such as the ability to represent hierarchical or tree-like data structures, with the accelerated convergence properties of Halpern-type iterations.  
From a practical standpoint, this construction bridges the gap between fixed-point theory in $CAT(0)$ spaces and modern geometric deep learning.  
In particular, the convergence guaranteed by the nonexpansivity of $J_{\lambda f}$ mirrors the behavior of HalpernSGD in Euclidean spaces, while adapting naturally to curved manifolds through the metric formulation of the proximal step.

\section{Conclusion and Future Work}
\label{sec:conclusione}
Our first main contribution lies in showing how proof strategies originally developed in Banach and Hilbert spaces, where linearity simplifies many arguments, can be successfully reformulated in the purely metric setting of $CAT(0)$ spaces.  
This approach not only recovers the classical asymptotic regularity bounds but also clarifies which steps of the convergence analysis depend essentially on the linear structure and which extend naturally to a general geodesic framework.
In addition, we introduced a hyperbolic optimization scheme, the Hyperbolic HalpernGD optimizer, obtained by replacing the Euclidean forward step with the proximal (resolvent) mapping in Hadamard spaces.
This construction provides a HalpernGD-type algorithm in nonpositively curved geometries, thereby yielding a principled optimizer for deep models operating in Hadamard spaces. As future work, we plan to implement and experimentally evaluate the stochastic counterpart of the Hyperbolic HalpernGD optimizer, comparing its performance with that of the standard Riemannian stochastic gradient descent (RSGD) widely used in hyperbolic deep learning.
From a theoretical standpoint, the asymptotic regularity rates suggest that Halpern-type methods should converge faster than Krasnosel’skii–Mann-type iterations, of which gradient descent is a particular instance.  
Therefore, we expect the Hyperbolic HalpernSGD optimizer to exhibit superior convergence speed and empirical efficiency compared with RSGD, in line with the improvements already observed in the Euclidean setting for HalpernSGD over classical SGD \cite{foglia2024halpernsgd,colao2025optimizer}.
Ultimately, this line of research aims to bridge fixed-point theory and convex analysis in metric spaces with hyperbolic deep learning, paving the way for faster and more geometrically consistent optimization algorithms on manifolds with curvature.
\\

\noindent\textbf{Acknowledgements}\\
\textit{This publication was partially funded by the PhD program in Mathematics and Computer Science at the University of Calabria, Cycle XXXVIII, with the support of a scholarship financed by DM 351/2022 (CUP H23C22000440007), within the NRRP funded by the European Union.}


\noindent Katherine Rossella Foglia (corresponding author)\\  
katherine.foglia@unical.it\\ \bigskip 

\noindent {\small
\noindent University of Calabria\\
Department of Mathematics and Computer Science\\
Ponte P. Bucci, 30B, Arcavacata di Rende (CS), 87036, Italy
}\bigskip

\noindent Vittorio Colao \\
vittorio.colao@unical.it\\ \bigskip

\noindent {\small
\noindent University of Calabria\\
Department of Mathematics and Computer Science\\
Ponte P. Bucci, 30B, Arcavacata di Rende (CS), 87036, Italy
}\bigskip


\begin{thebibliography}{99}

\bibitem{bacak2014convex}
M. Ba\v{c}\'ak,
\textit{Convex Analysis and Optimization in Hadamard Spaces},
Walter de Gruyter GmbH \& Co. KG, Berlin/Boston, 2014.

\bibitem{baillon1977}
J. Baillon, G. Haddad,
\textit{Quelques propri\'et\'es des op\'erateurs angle-born\'es et n-cycliquement monotones},
Israel J. Math. \textbf{26} (1977), 137--150.

\bibitem{baillon1996}
J. Baillon, R.E. Bruck,
\textit{The rate of asymptotic regularity is $\mathcal{O}(1/\sqrt{n})$},
[in:] A.G. Kartsatos (ed.), \textit{Theory and Applications of Nonlinear Operators of Accretive and Monotone Type},
Lecture Notes in Pure and Appl. Math. \textbf{178}, Dekker, New York, 1996, pp. 51--81.

\bibitem{bauschke2010baillon}
H.H. Bauschke, P.L. Combettes,
\textit{The Baillon-Haddad Theorem Revisited},
J. Convex Anal. \textbf{17} (2010), 781--787.

\bibitem{bonnabel2013stochastic}
S. Bonnabel,
\textit{Stochastic gradient descent on Riemannian manifolds},
IEEE Trans. Automat. Control \textbf{58} (2013), 2217--2229.

\bibitem{bot2022}
R.I. Bo\c{t}, D. Nguyen,
\textit{Fast Krasnosel'ski\u{\i}-Mann algorithm with a convergence rate of the fixed point iteration of $o(1/k)$},
arXiv:2206.09462, 2022.

\bibitem{bravo2018}
M. Bravo, R. Cominetti,
\textit{Sharp convergence rates for averaged nonexpansive maps},
Israel J. Math. \textbf{227} (2018), 163--188.

\bibitem{bridson2013metric}
M.R. Bridson, A. Haefliger,
\textit{Metric Spaces of Non-Positive Curvature},
Springer, Berlin, 2013.

\bibitem{bronstein2017geometric}
M.M. Bronstein, J. Bruna, Y. LeCun, A. Szlam, P. Vandergheynst,
\textit{Geometric deep learning: going beyond Euclidean data},
IEEE Signal Process. Mag. \textbf{34} (2017), 18--42.

\bibitem{browder1965}
F.E. Browder,
\textit{Nonexpansive nonlinear operators in a Banach space},
Proc. Nat. Acad. Sci. U.S.A. \textbf{54} (1965), 1041--1044.

\bibitem{colao2025optimizer}
V. Colao, K.R. Foglia, A. Giordano, E. Ritacco, W. Spataro,
\textit{An optimizer derived from Halpern's method for enhanced neural network convergence and reduced carbon emissions},
J. Intell. Inf. Syst. \textbf{64} (2026), 77--96.

\bibitem{colao2026convergence}
V. Colao, K.R. Foglia,
\textit{On the Convergence of HalpernSGD},
arXiv:2601.18906, 2026.

\bibitem{cominetti2014}
R. Cominetti, J.A. Soto, J. Vaisman,
\textit{On the rate of convergence of Krasnosel'ski\u{\i}-Mann iterations and their connection with sums of Bernoullis},
Israel J. Math. \textbf{199} (2014), 757--772.

\bibitem{contreras2023}
J.P. Contreras, R. Cominetti,
\textit{Optimal error bounds for non-expansive fixed-point iterations in normed spaces},
Math. Program. \textbf{199} (2023), 343--374.

\bibitem{dhompongsa2006fixed}
S. Dhompongsa, W.A. Kirk, B. Sims,
\textit{Fixed points of uniformly Lipschitzian mappings},
Nonlinear Anal. \textbf{65} (2006), 762--772.

\bibitem{dhompongsa2008convergence}
S. Dhompongsa, B. Panyanak,
\textit{On $\Delta$-convergence theorems in CAT(0) spaces},
Comput. Math. Appl. \textbf{56} (2008), 2572--2579.

\bibitem{espinola2009cat}
R. Esp{\'\i}nola, A. Fern{\'a}ndez-Le{\'o}n,
\textit{CAT(k)-spaces, weak convergence and fixed points},
J. Math. Anal. Appl. \textbf{353} (2009), 410--427.

\bibitem{foglia2024halpernsgd}
K.R. Foglia, V. Colao, E. Ritacco,
\textit{HalpernSGD: A Halpern-Inspired Optimizer for Accelerated Neural Network Convergence and Reduced Carbon Footprint},
[in:] A. Appice, H. Azzag, M.S. Hacid, A. Hadjali, Z.W. Ras (eds.),
\textit{Foundations of Intelligent Systems},
Lecture Notes in Comput. Sci. \textbf{14670}, Springer, Cham, 2024, pp. 296--305.

\bibitem{ganea2018hyperbolicNN}
O.-E. Ganea, G. B{\'e}cigneul, T. Hofmann,
\textit{Hyperbolic neural networks},
Adv. Neural Inf. Process. Syst. \textbf{31} (2018), 5345--5355.

\bibitem{halpern1967}
B. Halpern,
\textit{Fixed points of nonexpanding maps},
Bull. Amer. Math. Soc. \textbf{73} (1967), 957--961.

\bibitem{he2012mann}
J. He, D. Fang, G. L{\'o}pez, C. Li,
\textit{Mann's algorithm for nonexpansive mappings in CAT($\kappa$) spaces},
Nonlinear Anal. \textbf{75} (2012), 445--452.

\bibitem{ishikawa1976}
S. Ishikawa,
\textit{Fixed points and iteration of a nonexpansive mapping in a Banach space},
Proc. Amer. Math. Soc. \textbf{59} (1976), 65--71.

\bibitem{kirk2003geodesic}
W.A. Kirk,
\textit{Geodesic geometry and fixed point theory},
[in:] \textit{Seminar of Mathematical Analysis (M\'alaga/Sevilla, 2002/2003)},
Colecc. Abierta \textbf{64}, Univ. Sevilla Secr. Publ., Sevilla, 2003, pp. 195--225.

\bibitem{kirk2004geodesic}
W.A. Kirk,
\textit{Geodesic geometry and fixed point theory II},
[in:] \textit{International Conference on Fixed Point Theory and Applications},
Yokohama Publ., Yokohama, 2004, pp. 113--142.

\bibitem{kirk2008concept}
W.A. Kirk, B. Panyanak,
\textit{A concept of convergence in geodesic spaces},
Nonlinear Anal. \textbf{68} (2008), 3689--3696.

\bibitem{leustean2021quantitative}
L. Leu{\c{s}}tean, P. Pinto,
\textit{Quantitative results on a Halpern-type proximal point algorithm},
Comput. Optim. Appl. \textbf{79} (2021), no. 1, 101--125.

\bibitem{leustean2022alternating}
L. Leu{\c{s}}tean, P. Pinto,
\textit{Rates of asymptotic regularity for the alternating Halpern--Mann iteration},
J. Math. Anal. Appl. \textbf{515} (2022), 126477.

\bibitem{lieder2021convergence}
F. Lieder,
\textit{On the convergence rate of the Halpern-iteration},
Optim. Lett. \textbf{15} (2021), 405--418.

\bibitem{lim1976remarks}
T.C. Lim,
\textit{Remarks on some fixed point theorems},
Proc. Amer. Math. Soc. \textbf{60} (1976), 179--182.

\bibitem{mann1953mean}
W.R. Mann,
\textit{Mean value methods in iteration},
Proc. Amer. Math. Soc. \textbf{4} (1953), 506--510.

\bibitem{peng2021hyperbolic}
W. Peng, T. Varanka, A. Mostafa, H. Shi, G. Zhao,
\textit{Hyperbolic deep neural networks: a survey},
IEEE Trans. Pattern Anal. Mach. Intell. \textbf{44} (2022), no. 12, 10023--10044.

\bibitem{pinto2025halpern}
P. Pinto, N. Pischke,
\textit{On the Halpern method with adaptive anchoring parameters},
Math. Comp., published electronically in 2026, DOI: 10.1090/mcom/4184.

\bibitem{rockafellar1970}
R.T. Rockafellar,
\textit{Monotone operators associated with saddle-functions and minimax problems},
[in:] F.E. Browder (ed.), \textit{Nonlinear Functional Analysis, Part 1},
Proc. Sympos. Pure Math. \textbf{18}, Amer. Math. Soc., Providence, RI, 1970, pp. 241--250.

\bibitem{rockafellar1976}
R.T. Rockafellar,
\textit{Monotone operators and the proximal point algorithm},
SIAM J. Control Optim. \textbf{14} (1976), 877--898.

\bibitem{sabach2017first}
S. Sabach, S. Shtern,
\textit{A first order method for solving convex bilevel optimization problems},
SIAM J. Optim. \textbf{27} (2017), 640--660.

\bibitem{saejung2009halpern}
S. Saejung,
\textit{Halpern's iteration in CAT(0) spaces},
Fixed Point Theory Appl. \textbf{2010} (2009), 1--13.

\bibitem{sala2018representation}
F. Sala, C. De Sa, A. Gu, C. R{\'e},
\textit{Representation tradeoffs for hyperbolic embeddings},
[in:] J. Dy, A. Krause (eds.),
\textit{Proceedings of the 35th International Conference on Machine Learning},
Proc. Mach. Learn. Res. \textbf{80}, PMLR, 2018, pp. 4460--4469.

\bibitem{sipos2022abstract}
A. Sipo{\c{s}},
\textit{Abstract strongly convergent variants of the proximal point algorithm},
Comput. Optim. Appl. \textbf{83} (2022), no. 1, 349--380.

\bibitem{udriste94}
C. Udri{\c{s}}te,
\textit{Convex Functions and Optimization Methods on Riemannian Manifolds},
Math. Appl. \textbf{297}, Kluwer Acad. Publ., Dordrecht, 1994.

\bibitem{wittmann1992approximation}
R. Wittmann,
\textit{Approximation of fixed points of nonexpansive mappings},
Arch. Math. (Basel) \textbf{58} (1992), 486--491.

\bibitem{xu2004viscosity}
H.-K. Xu,
\textit{Viscosity approximation methods for nonexpansive mappings},
J. Math. Anal. Appl. \textbf{298} (2004), 279--291.

\end{thebibliography}
\end{document}